\address{
}
\keywords{betweenness relations, R-relations, road systems, antisymmetry, separativity, distributive closure, Grothendieck firbration, MacNeille completion, lattices, preorder, partial order}
\newcommand{\N}{{\mathbb N}}
\newcommand{\Q}{{\mathbb Q}}
\newcommand{\br}{{\mathbf R}}
\newcommand{\bt}{{\mathbf T}}
\newcommand{\lone}{{ L_1}}
\newcommand{\ltwo}{{ L_2}}
\newcommand{\lthree}{{ L_3}}
\newcommand{\li}{{ L_i}}
\newcommand{\lfour}{{ L_4}}
\newcommand{\bb}{{\mathbf B}}
\newcommand{\ba}{{\mathbf A}}
\newcommand{\bs}{{\mathbf {Set}}}
\newcommand{\RR}{{\mathcal R}}
\newcommand{\Set}{{\mathcal S}}
\newcommand{\PP}{{\mathcal P}}
\newtheorem{theorem}{Theorem}
\newtheorem{corollary}[theorem]{Corollary}
\newtheorem{lemma}[theorem]{Lemma}
\theoremstyle{definition}
\newtheorem{remark}[theorem]{Remark}
\newtheorem{example}[theorem]{Example}
\title{Betweenness relations in a categorical setting
 }
\author{J. Bruno, A. McCluskey and P. Szeptycki}
\begin{document}
\maketitle
\begin{abstract} We apply a categorical lens to the study of betweenness relations by capturing them within a topological category, fibred in lattices, and study several subcategories of it. In particular, we show that its full subcategory of finite objects forms a Fraiss\'{e} class implying the existence of a countable homogenous betweenness relation.
We furthermore show that the subcategory of antisymmetric betweenness relations is reflective. As an application we recover the reflectivity of distributive complete lattices within complete lattices, and we end with some observations on the Dedekind-MacNeille completion.
\end{abstract}

\section{Introduction and background}

The study of betweenness relations dates back as far the late 1800's (\cite{OldBeet}) with sporadic revivals throughout the last century whose focus lie in characterising certain partial orders in terms of the betweenness relations they generate (see for example \cite{ResultsChara}, \cite{MR0313134},
\cite{MR1120620}, \cite{MR1501071}, \cite{MR2785544}, \cite{MR1383662} and  \cite{MR1362944}). An excellent modern approach can be found in \cite{pre06252969} where Bankston explores a large variety of settings in which betweenness relations arise, with emphasis on ordered sets, metric spaces and continuum topology. The author makes a strong case for considering the simple notion of a {\it road system} on an arbitrary set as a natural approach to generating the intuitive idea of betweenness relations as ternary relations. In the broadest of interpretations, for a point $b$ to lie {\it between} points $a$ and $c$ it must be that any way to get from $a$ to $c$ must inevitably go though $b$. Intuitively, given an arbitrary point, it is desirable that no other point lies between it and itself. If, in addition, one demands for a way to reach any point from any other, then one is in the presence of a {\bf road system} (in the sense of Bankston): a set $X$ with a collection of {\bf roads}, $\RR \subseteq \PP(X)$, so that (a) for any point $x\in X$, $\{x\} \in \RR$ and (b) for any pair $\{x,y\} \subseteq X$, $\{x,y\} \subseteq R \in \RR$ for some $R$. In this sense, a point lies between a pair of points provided that it is contained in any road containing the pair. Examples of road systems abound: branches in a tree, (connected)-components (resp. continuum connected) in connected topological spaces (resp. continuum-wise connected), intervals in order sets, linear subspaces in real vector spaces, etc. A natural setting for betweenness of points can be found in ternary relations; for an arbitrary set $X$, a ternary relation $R\subseteq X^3$ can be interpreted as a betweenness relation by reading $(a,b,c)$ as $b$ lies between $a$ and $c$. In the language of Bankston, a ternary relation arising from a road system is called an {\bf R-relation} and it turns out that this primitive notion of betweenness can be captured in a very basic first-order language with only one ternary predicate symbol (i.e., $(\cdot,\cdot,\cdot)$) and equality. 

From a categorical view point, the above work is concerned with concrete objects in a suitable chosen category. The canonical choice of arrows for R-relations would be those where {\it betweenness} is preserved (i.e., monotone functions between ternary relations). It is this choice of morphisms and the category, $\br$, it generates that outlines the scope of this paper. More precisely, we study dualities between several types of ordered sets and the R-relations they generate. As an application, we illustrate a duality between distributive lattices and the {\it antisymmetric} R-relations developed from such and, thereby, establish a reflector between a category of complete lattices and its full subcategory of their distributive counterparts (i.e., the antisymmetric closure of an R-relation). The layout for this paper is the following:  in Section~\ref{sec:r-relations} we begin the study of R-relations by interpreting its defining first-order axioms as categories and adjunctions thereof. Here we also show $\br$ is a topological category and, as a consequence, that its full subcategory of finite objects forms a Fraiss\'{e} class, giving rise to a homogeneous relational R-relation in $\br$ (see \cite{MR2800979}). Exploiting Grothendieck's construction, we also illustrate the forgetful functor $\br \to \bs$ as a Grothendieck fibration.
Section~\ref{sec:sepanddist} begins by locating the category of antisymmetric R-relations as reflective subcategory of its ambient category. We construct this reflector as an inverse limit of length $\omega$. Lastly, we make use of this adjunction, in addition to results from previous sections, to establish dualities between certain types of ordered sets and the R-relations they generate; in particular, the aforementioned duality between distributive lattices and antisymmetric R-relations. We end this section by highlighting some minor observations to do with the Dedekind-MacNeille completion of a partial order from a betweenness perspective.

For the most part this article is self contained and notation is standard. However, we make use of the following: ternary relations will be denoted by $[\cdot,\cdot,\cdot]$ with subscripts to differentiate between them (i.e. $[\cdot,\cdot,\cdot]_X$, $[\cdot,\cdot,\cdot]_\RR$, etc). For a set $X$ and a relation $[\cdot,\cdot,\cdot]$ on it, we denote $(a,b,c) \in [\cdot,\cdot,\cdot]$ by $[a,b,c]$; subscripts are also included in this notation (e.g. $(a,b,c) \in [\cdot,\cdot,\cdot]_\RR$, is denoted as $[a,b,c]_\RR$). We also remark that throughout the paper a {\it monotone} function refers to an order-preserving function between ternary relations and while order-preservation between ordered sets is simply referred to as {\it ordered-preserving} functions. All categorical notions are standard and can be found in \cite{MR1712872} and \cite{MR2240597}.

\section{R-relations}\label{sec:r-relations}
Let $\bt$ denote the bicomplete category whose objects are sets endowed a with ternary relation and whose morphisms are monotone functions (i.e., for objects  $(X,[\cdot,\cdot,\cdot]_X)$ and $(Y,[\cdot,\cdot,\cdot]_Y)$ a function $f: X \to Y$ is a morphism provided $[a,b,c]_X \Rightarrow [f(a),f(b),f(c)]_Y$. The forgetful functor $\bt \to \bs$ is left and right adjoint and, therefore, the underlying sets for limits and colimits in $\bt$ are those of limits and colimits in $\bs$.

 For a set $X$, a {\bf road system} $\RR$ on it is a collection of subsets of $X$ so that: (a) any singleton is in $\RR$ and (b) for any two points in $X$ there exists a set in $\RR$ containing both. Any such road system generates a ternary relation $[\cdot,\cdot,\cdot]_\RR$ on $X$ called an {\bf R-relation} where $[a,b,c]_\RR$ if, and only if, $b\in R$ for any $R \in \RR$ with $a,c\in R$. Using Bankston's notation, we let 

$$[a,b]:=\{c\in X \mid [a,c,b]_\RR\} = \bigcap_{a,b\in R}R.$$

 In \cite{pre06252969} the author illustrates how ternary relations arising from road systems are first-order axiomatizable. 

\begin{lemma}[Bankston] A relation $[\cdot,\cdot,\cdot]$ on a set $X$ can be generated from a road system if, and only if, the following universal axioms hold.

\begin{itemize}
\item[ ] (R1)  Reflexivity: $[a,b,b]$;
\medskip
\item[ ] (R2) Symmetry: $[a,b,c] \Rightarrow [c,b,a]$;
\medskip
\item[ ] (R3) Minimality: $[a,b,a] \Rightarrow a=b$; and
\medskip
\item[ ] (R4) Transitivity: $[a,b,c] \land [a,d,c] \land [b,x,d] \Rightarrow [a,x,c]$.
\end{itemize}
 
\end{lemma}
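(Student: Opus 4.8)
The plan is to prove the two implications by direct verification, with the \emph{interval} operator $[a,b]=\{c\in X\mid [a,c,b]\}$ playing the central role in the nontrivial direction.

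For the forward implication, suppose $[\cdot,\cdot,\cdot]=[\cdot,\cdot,\cdot]_\RR$ for a road system $\RR$. Then (R1) is immediate since $b$ lies in every road, in particular in every road containing $a$ and $b$; (R2) holds because the roads containing $a,c$ are literally the roads containing $c,a$; and (R3) uses the singleton axiom: if $[a,b,a]_\RR$ then, since $\{a\}\in\RR$ contains $a$, we get $b\in\{a\}$, i.e.\ $a=b$. The only axiom needing a moment's thought is (R4): given $[a,b,c]_\RR$, $[a,d,c]_\RR$, $[b,x,d]_\RR$ and an arbitrary road $R\in\RR$ with $a,c\in R$, the first two hypotheses force $b,d\in R$, and then the third hypothesis, applied to this very road $R$ which now contains both $b$ and $d$, yields $x\in R$; since $R$ was arbitrary, $[a,x,c]_\RR$ follows.

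For the converse, assume (R1)--(R4). I would set $\RR:=\{[a,b]\mid a,b\in X\}$ and check (i) that $\RR$ is a road system and (ii) that $[\cdot,\cdot,\cdot]_\RR=[\cdot,\cdot,\cdot]$. For (i): (R1) together with (R2) gives $[x,x,y]$ and $[x,y,y]$ for all $x,y$, so $\{x,y\}\subseteq[x,y]$, which is the connectivity requirement; and $[a,a]=\{a\}$, because $[a,a,a]$ holds by (R1) while $[a,c,a]\Rightarrow a=c$ is (R3), so every singleton is a road. For (ii), the inclusion $[a,b,c]_\RR\Rightarrow[a,b,c]$ is obtained by testing the definition of $[a,b,c]_\RR$ on the single road $[a,c]$, which contains $a$ and $c$ by the computation just made, so $b\in[a,c]$, i.e.\ $[a,b,c]$. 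The reverse inclusion $[a,b,c]\Rightarrow[a,b,c]_\RR$ is where (R4) does the work: given any road $[p,q]$ with $a,c\in[p,q]$, i.e.\ $[p,a,q]$ and $[p,c,q]$, apply (R4) to $[p,a,q]\land[p,c,q]\land[a,b,c]$ to conclude $[p,b,q]$, that is $b\in[p,q]$; as $[p,q]$ ranges over all roads containing $a$ and $c$, this says exactly $[a,b,c]_\RR$.

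The main obstacle is purely bookkeeping: correctly matching the variables of the transitivity axiom (R4) — the load-bearing axiom, invoked once in each direction — to the right instances, and handling the edge cases of singletons and the diagonal intervals $[a,a]$ so that the candidate collection $\{[a,b]\}$ is genuinely a road system. No deeper idea is required beyond recognising the intervals $[a,b]$ as the canonical roads.
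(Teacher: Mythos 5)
Your proof is correct, and it is the standard argument: the paper itself states this lemma without proof, attributing it to Bankston's work, so there is no in-paper argument to compare against. Both directions check out — in particular the key instantiation of (R4) with $a'=p$, $c'=q$, $b'=a$, $d'=c$, $x'=b$ to show $[a,b,c]\Rightarrow b\in[p,q]$ for every interval-road containing $a$ and $c$ is exactly right, as is the verification via (R1)--(R3) that the intervals $[a,b]$ form a road system with $[a,a]=\{a\}$. The only blemish is the throwaway phrase ``$b$ lies in every road'' in your (R1) check, which is false as stated but harmless since the clause that follows gives the correct reason.
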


\noindent
For instance, (R4) reads as 
$$\forall a,b,c,d,x\in X\left([a,b,c], [a,d,c], [b,x,d] \Rightarrow  [a,x,c] \right).$$
\noindent
An R$_i$-relation, $i\leq 4$, is a ternary relation satisfying its corresponding universal axiom. For any set $X$, the smallest ternary relation on it that qualifies as an R-relation is $X_\bot:=\{(a,b,b),(b,b,a)\mid a,b\in X\}$,  while the largest is $X_\top:=X^3 \smallsetminus \{(a,b,a)\mid a\not = b\}$. The full subcategory $\br$ of $\bt$ will be that of R-relations. Notice that for any set $X$ the collection $R(X)$ of all R-relations on $X$ forms a complete lattice where meets are simply intersections and thus $\br$ is fibred in complete lattices. For notational convenience we interchangeably use the pair $(X,[\cdot,\cdot,\cdot])$ and $[\cdot,\cdot,\cdot]$ to denote objects in $\bt$ and $\br$; in the latter the underlying set will always be known from context. For $i\leq 2$, define the functors $\li:\bt \to \bt$ 
\[
\lone([\cdot,\cdot,\cdot]) = [\cdot,\cdot,\cdot] \cup \{(a,b,b)  \mid a,b\in X\}
\]
\[
\ltwo([\cdot,\cdot,\cdot]) = [\cdot,\cdot,\cdot] \cup \{(a,b,c)  \mid (c,b,a) \in [\cdot,\cdot,\cdot]\}.
\]

\noindent
The following can be easily verified.

\begin{lemma} The functor $\ltwo$ preserves R1 and $\ltwo\circ \lone : \bt \to \ltwo\circ \lone[\bt]$ is a left adjoint.
\end{lemma}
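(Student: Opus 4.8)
The plan is to read the second assertion as a statement about reflective subcategories: writing $\iota\colon \ltwo\circ\lone[\bt]\hook\bt$ for the inclusion of the (full) subcategory of ternary relations satisfying \emph{both} R1 and R2, I would show that $\ltwo\circ\lone$ corestricts to a left adjoint of $\iota$, i.e.\ that $\ltwo\circ\lone[\bt]$ is reflective in $\bt$ with reflector $\ltwo\circ\lone$. The first assertion is essentially a lemma in service of this. Indeed R1 only asks a relation to \emph{contain} every triple $(a,b,b)$, and $\ltwo$ never deletes triples, so $\ltwo([\cdot,\cdot,\cdot])\supseteq[\cdot,\cdot,\cdot]$ inherits R1 at once; note also that $\ltwo([\cdot,\cdot,\cdot])$ is always symmetric by construction. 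Combining this with the routine observations that $\lone$ forces R1 and that both $\lone$ and $\ltwo$ act as the identity on any relation already satisfying R1 and R2, one gets that $\ltwo\circ\lone[\bt]$ is exactly the full subcategory of $\bt$ on the R1,R2 relations, and that $\ltwo\circ\lone$ does land there.

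Next I would exhibit, for each object $Z=(X,[\cdot,\cdot,\cdot])$ of $\bt$, a reflection arrow $\eta_Z\colon Z\to \iota\bigl(\ltwo(\lone(Z))\bigr)$, taking $\eta_Z$ to be the identity map of the underlying set $X$. This is a $\bt$-morphism precisely because $\lone$ and $\ltwo$ only enlarge relations, so that $[\cdot,\cdot,\cdot]\subseteq\ltwo(\lone([\cdot,\cdot,\cdot]))$. Naturality of $\eta$ in $Z$ is then automatic (both composites have underlying function the given one), so it suffices to check the universal property of each $\eta_Z$, after which ``$\ltwo\circ\lone$ is a left adjoint'' follows.

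The universal property is the heart of the argument. Given a $\bt$-morphism $f\colon Z\to\iota(W)$ with $W$ satisfying R1 and R2, I would show that the \emph{same} underlying function $f$ is a $\bt$-morphism $\ltwo(\lone(Z))\to W$; it is then the unique factorization of $f$ through $\eta_Z$, uniqueness being immediate since $\eta_Z$ is the identity on underlying sets. To verify this, fix $(a,b,c)\in\ltwo(\lone([\cdot,\cdot,\cdot]))$ and argue by cases on how the triple arose: if it already lies in $[\cdot,\cdot,\cdot]$ use that $f$ is a morphism; if it is a triple $(a,b,b)$ adjoined by $\lone$ use R1 of $W$; and if it was adjoined by $\ltwo$, so that $(c,b,a)\in\lone([\cdot,\cdot,\cdot])$, apply the previous two cases to $(c,b,a)$ to land $(f(c),f(b),f(a))$ in $W$ and then invoke R2 of $W$. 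Conceptually this just says that $\ltwo(\lone([\cdot,\cdot,\cdot]))$ is the least relation on $X$ above $[\cdot,\cdot,\cdot]$ satisfying R1 and R2, so a monotone map out of it into an R1,R2 object is the same datum as a monotone map out of $[\cdot,\cdot,\cdot]$.

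I do not expect a genuine obstacle: once the bookkeeping is organized as above, every step is a short verification. The only point that needs care — and it is exactly the content of the first assertion — is making sure $\ltwo$ does not undo the R1-ness freshly produced by $\lone$, so that $\ltwo\circ\lone$ really corestricts to the R1,R2 subcategory; this is what keeps the target of the adjunction well defined.
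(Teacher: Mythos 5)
Your proof is correct: the paper offers no argument for this lemma (it is dismissed with ``The following can be easily verified''), and your verification --- that $\ltwo$ only enlarges relations so R1 is inherited, that $\ltwo\circ\lone$ lands in and fixes the full subcategory of R1, R2 relations, and that the identity-on-underlying-sets unit has the universal property via the three-case analysis --- is exactly the standard check the authors intend. No gaps.
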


The axiom R3 is set-theoretic in nature: it is the only axiom that mentions equality of elements. Consequently, R3 closures of ternary relations will inevitably modify the underlying set. To develop R3 closures of ternary relations, it is not enough to glue points together that the axiom R3 demands should be the same. The following example illustrates just that. 
\begin{example} Put $X = \{a_1,a_2,a_3,a_4\}$ and let $[\cdot,\cdot,\cdot]$ be the following:
\[
X_\bot \cup \{(a_1,a_2,a_1), (a_1,a_3,a_2), (a_2,a_3,a_1), (a_3,a_4,a_2), (a_2,a_4,a_3)\}.
\]

\noindent
Gluing $a_1$ and $a_2$ together, call this new point $y$, yields $Y = \{y, a_2,a_3,a_4\}$ where the smallest ternary relation on $Y$ making the quotient map $X \to Y$ monotone must be:

\[
[\cdot,\cdot,\cdot]_Y = Y_\bot \cup \{(y,a_3,y), (y,a_3,y), (a_3,a_4,y), (y,a_4,a_3)\}.
\]

\noindent
Again, this new ternary relation does not satisfy R3, and we must glue $y$ with $a_3$; denote this new point $z$. The set $Z = \{z, a_4\}$ must be endowed with the ternary relation

\[
[\cdot,\cdot,\cdot]_Z = Z_\bot \cup \{(z,a_4,z)\},
\]

\noindent
so as to make the quotient map $Y \to Z$ monotone. This leads us to finally glue $z$ and $a_4$ together and end up with a one-point set: the actual R3 closure of $(X,[
[\cdot,\cdot,\cdot])$. 
\end{example}
The above example can be easily extended to an infinite set endowed with a ternary relation that would require of an infinite number of gluing steps to achieve its R3 closure. In a nutshell, the act of gluing points and then endowing the resulting quotient set with the smallest ternary relation making the quotient map monotone can lead to a new ternary relation the fails to satisfy R3. It is this observation that leads us to the following construction.

 Take any $\bt$-object $(X,[\cdot,\cdot,\cdot])$ and for any $a,b \in X$ denote
\begin{itemize}
\item $a \sim_1 a$ if, and only if,  $a = b$.
\medskip
\item $a \sim_{n+1} b$ if, and only if, $a \sim_n b$ or $\exists m\in \N$ with $\{[ r_i, s_i,t_i ] \in [\cdot,\cdot,\cdot]\}_{i\leq m}$ so that 
\medskip
\begin{itemize}
\item  $a\in \{r_m,s_m,t_m\}$ and $b \in \{r_1,s_1,t_1\}$, and
\medskip
\item for all $i\leq m$, $r_i \sim_n t_{i}$ and $x  \sim_n y$ with $x\in\{r_i,s_i,t_i\}$ and $y\in\{r_{i+1},s_{i+1},t_{i+1}\}$.
\end{itemize}
\end{itemize}
 \noindent
 We demand $a \sim_\omega b$ precisely when $a\sim_n b$ for some $n \in \N$.
 
 \begin{lemma}\label{lem:equiv} The relation $\sim_\omega$ defined above is an equivalence relation on $X$.
 \end{lemma}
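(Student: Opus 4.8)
The plan is to verify reflexivity, symmetry and transitivity of $\sim_\omega$ in turn, using throughout that the relations $\sim_n$ are nested: directly from the recursive definition $a\sim_n b$ implies $a\sim_{n+1}b$, so $\sim_1\subseteq\sim_2\subseteq\cdots$ and $\sim_\omega=\bigcup_{n\in\N}\sim_n$; in particular any finite set of $\sim_\omega$-related pairs already lies in a common $\sim_N$. Reflexivity is then immediate, since $a\sim_1 a$ holds for every $a\in X$, hence $a\sim_\omega a$.

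For symmetry I would show, by induction on $n$, that each $\sim_n$ is symmetric, so that $\sim_\omega$ inherits symmetry as a union. The base $\sim_1$ is equality. For the step, assume $a\sim_{n+1}b$. If $a\sim_n b$, then $b\sim_n a$ by the inductive hypothesis and so $b\sim_{n+1}a$. Otherwise fix a witnessing list of triples $[r_1,s_1,t_1],\dots,[r_m,s_m,t_m]$ from $[\cdot,\cdot,\cdot]$ with $a\in\{r_m,s_m,t_m\}$, $b\in\{r_1,s_1,t_1\}$, each $r_i\sim_n t_i$, and each pair of consecutive triples sharing a $\sim_n$-related pair of elements. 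Reversing the \emph{order} of this list while leaving every individual triple intact again gives a witnessing list: the conditions $r_i\sim_n t_i$ are untouched, and each linking pair $x\sim_n y$ becomes $y\sim_n x$, valid by the inductive hypothesis; now $a$ lies in the first triple and $b$ in the last, so $b\sim_{n+1}a$. No triple is ever reversed, so the fact that objects of $\bt$ need not satisfy (R2) plays no role.

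For transitivity, suppose $a\sim_\omega b$ and $b\sim_\omega c$, and choose $N$ with $a\sim_{N+1}b$ and $b\sim_{N+1}c$. If $a=b$ or $b=c$ there is nothing to prove, so assume $a\neq b$ and $b\neq c$. The point is the following normal form: when $a\neq b$, the relation $a\sim_{N+1}b$ is necessarily witnessed by an honest finite list of triples of $[\cdot,\cdot,\cdot]$, of the shape above, all of whose internal and linking conditions hold \emph{at level $N$} --- unwinding the recursive definition, such a list appears at some level $k\le N$ and is promoted to level $N$ via $\sim_k\subseteq\sim_N$, since the unwinding cannot bottom out at $\sim_1$ (that would force $a=b$). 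Granting this, pick a witnessing list $V_1,\dots,V_p$ for $a\sim_{N+1}b$ with $a\in V_p$, $b\in V_1$, and a witnessing list $W_1,\dots,W_q$ for $b\sim_{N+1}c$ with $b\in W_q$, $c\in W_1$. Then $W_1,\dots,W_q,V_1,\dots,V_p$ is again a witnessing list at level $N$: the internal conditions and the links inside each block are retained, and the junction $W_q,V_1$ is linked through the common element $b$ using $b\sim_N b$ (reflexivity of $\sim_N$, valid since $\sim_1\subseteq\sim_N$). Its first triple contains $c$ and its last contains $a$, so $a\sim_{N+1}c$, hence $a\sim_\omega c$.

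The main obstacle I anticipate is precisely this normal-form step in the transitivity argument: a derivation of $a\sim_{N+1}b$ may run through the first disjunct ``$a\sim_n b$'' of the recursion several times before an actual list of triples is produced, and one must confirm that the list so obtained, possibly extracted at a lower level, may be read uniformly at the single level $N$. The rest --- reflexivity, the induction for symmetry, and the orientation/junction bookkeeping in the concatenation --- is routine once this is settled.
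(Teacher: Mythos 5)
Your proof is correct and follows essentially the same route as the paper's: reflexivity and symmetry are handled directly (the paper calls them obvious, you supply the induction with list reversal), and transitivity is obtained by concatenating the two witness chains through the shared point $b$, exactly as in the paper. Your extra care with the ``normal form'' step --- extracting an honest witness list readable at a single level $N$ using the nesting $\sim_k\subseteq\sim_N$ --- fills in a detail the paper's proof silently assumes, but it does not change the argument.
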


 \begin{proof} Reflexivity and symmetry are obvious so let $a \sim_\omega b$ and $b\sim_\omega c$. It follows that for some $k \in \N$ we have $a \sim_{k+1} b$ and $b \sim_{k+1} c$. By definition, there exist finite sequences $ \{[ r_i,s_i,t_i] \mid i\leq m\}$ and $ \{[ u_i,v_i,w_i] \mid i \leq n \}$ so that
 $$
  [r_1,s_1,t_1] \wedge \ldots \wedge  [r_m,s_m,t_m] \mbox{ and }
 $$
 $$
  [u_1,v_1,w_1]  \wedge \ldots \wedge  [u_n,v_n,w_n]
 $$
 \noindent
 with the defining relationships between the variables $r_i,s_i,t_i,u_i,v_i$ and $w_i$, and the points $a,b$ and $c$. Since $\{b\}\subseteq \{r_m,s_m,t_m\} \cap \{u_1,v_1,w_1\}$, the two chains above can be concatenated so as to form a chain from $a$ to $c$, in which case $a \sim_{k+1} c$ and transitivity holds. 
 \end{proof}

Next, put $Z = X/{\sim_\omega}$ with quotient map $p: X \to Z$ and ternary relation $[\cdot,\cdot,\cdot]_Z$ on $Z$ so that $[a,b,c]_Z \Leftrightarrow [x,y,z]$ for some $x\in a$, $y\in b$ and $z\in c$; by design, the quotient map $p: X \to Z$ is monotone.

 \begin{lemma}\label{lem:R3}  For $\lthree:\bt \to \lthree[\bt]$ by $L_3[(X,[\cdot,\cdot,\cdot])] = (Z,[\cdot,\cdot,\cdot]_Z) $, as illustrated above, we have:
 \begin{enumerate}[(a)]
\item $(Z,[\cdot,\cdot,\cdot]_Z)$ is an R3 relation and $L_3$ preserves R1 and R2, and
\medskip
\item the functor $\lthree:\bt \to \lthree[\bt]$ is left adjoint.
  \end{enumerate}
 
 \end{lemma}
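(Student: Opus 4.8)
The plan is to exhibit $\lthree$ as the reflector of $\bt$ onto its full subcategory of R3-objects, the unit being the quotient map $p\colon X\to Z$ constructed just above the statement. Concretely I would prove four things, in order: (i) $(Z,[\cdot,\cdot,\cdot]_Z)$ satisfies R3, and $\lthree$ preserves R1 and R2; (ii) on any R3-object the relation $\sim_\omega$ is the diagonal, so that $\lthree$ fixes every R3-object up to isomorphism and $\lthree[\bt]$ is precisely the full subcategory of R3-objects; (iii) every $\bt$-morphism $f\colon X\to W$ with $W$ an R3-object is constant on the $\sim_\omega$-classes of $X$; and (iv) assembling (i)--(iii) into the universal property of $p$, which gives the adjunction $\lthree\dashv(\lthree[\bt]\hookrightarrow\bt)$.

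For (i), suppose $[\alpha,\beta,\alpha]_Z$. By the definition of $[\cdot,\cdot,\cdot]_Z$ there are $x,z$ in the class $\alpha$ and $y$ in the class $\beta$ with $[x,y,z]$ holding in $X$; fix $n$ with $x\sim_n z$. The length-one sequence $[r_1,s_1,t_1]:=[x,y,z]$ then witnesses $y\sim_{n+1}x$ straight from the definition of $\sim_{n+1}$: indeed $y,x\in\{r_1,s_1,t_1\}$, the requirement $r_1\sim_n t_1$ is precisely $x\sim_n z$, and the chaining clause, which only constrains consecutive triples, is vacuous for $m=1$. Hence $y\sim_\omega x$, i.e.\ $\alpha=\beta$ in $Z$, so R3 holds. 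Preservation of R1 and R2 is immediate on representatives: if $X$ is R1 then $[x,y,y]$ holds for chosen $x\in\alpha$, $y\in\beta$, giving $[\alpha,\beta,\beta]_Z$; and if $X$ is R2 then $[\alpha,\beta,\gamma]_Z$ produces $[x,y,z]$, hence $[z,y,x]$, hence $[\gamma,\beta,\alpha]_Z$.

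Steps (ii) and (iii) are the technical core, and both go by induction on $n$ running essentially the same computation. For (ii) one shows $\sim_n$ is the diagonal on an R3-object $W$: the case $n=1$ is the definition, and if $a\sim_{n+1}b$ (and not already $a\sim_n b$) then each triple $[r_i,s_i,t_i]$ occurring in a witness has $r_i\sim_n t_i$, hence $r_i=t_i$ by induction, hence $r_i=s_i$ by R3 applied to $[r_i,s_i,r_i]\in[\cdot,\cdot,\cdot]_W$; the chaining clause and the inductive hypothesis then force all the $r_i$ — now the only entries of all the triples — to coincide, so $a=b$. Thus $\sim_\omega$ collapses to equality on $W$ and $\lthree W\cong W$. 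For (iii) one reruns the same induction with a monotone $f\colon X\to W$ ($W$ an R3-object) in place of the identity, proving $a\sim_n b\Rightarrow f(a)=f(b)$: apply $f$ to each witnessing triple to land in $[\cdot,\cdot,\cdot]_W$, use the inductive hypothesis to get $f(r_i)=f(t_i)$, then R3 to get $f(r_i)=f(s_i)=f(t_i)$, and finally the chaining clause to glue $f$-values across consecutive triples, concluding $f(a)=f(b)$. (The same style of induction shows any $\bt$-morphism respects each $\sim_n$ and hence descends to the quotients, which is what makes $\lthree$ a functor and $p$ natural in $X$.)

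Finally (iv): given $f\colon X\to W$ with $W$ an R3-object, (iii) lets $f$ factor through $p$ as a set map $\bar f\colon Z\to W$, uniquely so since $p$ is surjective; and $\bar f$ is a $\bt$-morphism because unpacking $[\alpha,\beta,\gamma]_Z$ to some $[x,y,z]$ in $X$ with $x\in\alpha$, $y\in\beta$, $z\in\gamma$ yields $[f(x),f(y),f(z)]_W$, i.e.\ $[\bar f(\alpha),\bar f(\beta),\bar f(\gamma)]_W$. Since by (a) every object of $\lthree[\bt]$ is an R3-object, this says exactly that $p=\eta_X$ is a universal arrow from $X$ into $\lthree[\bt]\hookrightarrow\bt$; the induced bijections $\mathrm{Hom}_{\bt}(X,W)\cong\mathrm{Hom}_{\lthree[\bt]}(\lthree X,W)$ are precomposition with $p$ and are therefore natural, so $\lthree$ is left adjoint to the inclusion. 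I expect the only real difficulty to be bookkeeping rather than mathematics: the recursion defining $\sim_{n+1}$ looks asymmetric and its chaining clause carries a quantifier whose scope must be read carefully, so the inductions in (ii)--(iii) must be set up so that the single-triple base case and the gluing step line up precisely with the definition.
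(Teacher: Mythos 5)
Your proposal is correct and follows essentially the same route as the paper: establish R3 on the quotient by exhibiting a witnessing chain for $\sim_{n+1}$ from a triple $[x,y,z]$ with $x\sim_n z$, then show by induction on $n$ that any monotone map into an R3-object is constant on $\sim_\omega$-classes, and factor through $p$ to get the universal arrow. Your additional step (ii) (that $\sim_\omega$ is the diagonal on R3-objects, so $\lthree[\bt]$ is exactly the full subcategory of R3-objects) and your explicit appeal to R3 in $W$ to get $f(s_i)=f(r_i)$ are small tightenings of details the paper leaves implicit, not a different argument.
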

\begin{proof} (a) We begin by showing that $(Z,[\cdot,\cdot,\cdot]_Z)$ satisfies R3. Let $[a,b,a]_Z$ for some $a,b \in Z$ and recall that this is true precisely when for some $x,z\in a$ and $y\in b$, $[x,y,z] $. Let $n\in \N$ be the smallest number for which $x\sim_n z$ (to avoid trivialities we assume $n>1$). It follows that there exists $\{[ r_i, s_i,t_i ] \}_{i\leq m}$ (for some $m\in \N$) with $x\in\{r_1,s_1,t_1\}$, $z\in\{r_m,s_m,t_m\}$ and so that for all $i\leq m$, $s_i \sim_n t_{i}$ and, $u  \sim_n v$ with $u\in\{r_i,s_i,t_i\}$ and $v\in\{r_{i+1},s_{i+1},t_{i+1}\}$. By adjoining $[x,y,z]$ to $\{[ r_i, s_i,t_i ] \}_{i\leq m}$ and since $x\sim_n z$ we have that $x\sim_{n+1} y \sim_{n+1} z$ and that $a=b$. That $L_3$ preserves R1 and R2 can be easily verfied.

(b) Assume that for some R$_3$-relation $(W,[\cdot,\cdot,\cdot]_W)$ there exists a monotone $f:X\to W$. We first show that for all $a\in X$ the assignment for which $p(a) \mapsto f(a)$ is well-defined ($p:X\to Z$ is the quotient map). Clearly, if $a\sim_1b$ for $a,b\in X$, then $ f(a) = f(b)$. Assume this holds up to some $k\in \N$ (i.e., $a\sim_k b \Rightarrow  f(a) = f(b) $).  If $a\sim_{k+1} b$ then we can find  $ \{[ r_i,s_i,t_i]\in [\cdot,\cdot,\cdot] \mid i\leq n\}$ with $a\in\{r_1,s_1,t_1\}$, $b\in\{r_m,s_m,t_m\}$ and so that for all $i\leq m$, $s_i \sim_n t_{i}$ and, $u  \sim_n v$ with $u\in\{r_i,s_i,t_i\}$ and $v\in\{r_{i+1},s_{i+1},t_{i+1}\}$. For all $i \leq m$, by the inductive hypothesis, $f(r_i)=f(s_i)=f(t_i)$ and $f(u) = f(v)$ for some $u\in\{r_i,s_i,t_i\}$ and some $v\in\{r_{i+1},s_{i+1},t_{i+1}\}$. It follows that $\forall n\in \N$, $a\sim_n b \Rightarrow f(b) = f(a)$ and that $a\sim_\omega b \Rightarrow f(a) = f(b)$. That is, $f$ maps elements from the same equivalence class in $\sim_\omega$ to the same element in $W$. Hence, the assignment for which $p(a) \mapsto f(a)$ is a well-defined function; denote it by $f'$ and notice that $f = f'\circ p$. Lastly, we show monotonicity of $f'$. For $a,b,c\in Z$, $[a,b,c]_Z$ occurs when for some $x\in a$, $y\in b$ and $z\in c$ we have $[x,y,z]$. That $f$ is monotone yields $[f(x),f(y),f(z)]_W$, in which case $[f'(a),f'(b),f'(c)]_W$, since $f = f'\circ p$. Hence the functor $\lthree:\bt \to \lthree[\bt]$ is left adjoint as claimed.
\end{proof}

The axiom R4 is largely more straightforward. Given any ternary relation $(X,[\cdot,\cdot,\cdot])$ and a pair $a,b \in X$ define recursively,

\begin{itemize}
\item $[a,b]_1 = [a,b]$
\medskip
\item $[a,b]_{n+1} = [a,b]_n \cup \{x\in X \mid \exists c,d \in [a,b]_n$ so that $x \in [c,d]_n\}$.
\end{itemize}
\noindent
Next, let $[a,b]_\omega = \bigcup_{n \in \omega} [a,b]_n$ and $(X,[\cdot,\cdot,\cdot]_\omega)$ be the one generated by the intervals $[a,b]_\omega$.

 \begin{lemma}\label{lem:R4}  For $\lfour:\bt \to \bt$ so that $(X,[\cdot,\cdot,\cdot]) \mapsto (X,[\cdot,\cdot,\cdot]_\omega) $, as illustrated above, we have:
 \begin{enumerate}[(a)]
\item $(X,[\cdot,\cdot,\cdot]_\omega)$ is an R4 relation and $L_4$ preserves R1-R3, and
\medskip
\item the functor $\lfour:\bt \to \lfour[\bt]$ is left adjoint.
  \end{enumerate}
\end{lemma}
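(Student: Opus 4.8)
The plan is to follow the template of Lemma~\ref{lem:R3}, exploiting two elementary facts about the construction: that $[a,b]_n \subseteq [a,b]_{n+1}$ for every $n$, so that $[a,b]_\omega = \bigcup_{n} [a,b]_n$ is an increasing union, and that the interval of $a,b$ computed inside $(X,[\cdot,\cdot,\cdot]_\omega)$ is precisely $[a,b]_\omega$.

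For (a) I would verify R4 for $(X,[\cdot,\cdot,\cdot]_\omega)$ directly: if $[a,b,c]_\omega$, $[a,d,c]_\omega$ and $[b,x,d]_\omega$, then $b,d\in[a,c]_\omega$ and $x\in[b,d]_\omega$, so using the increasing union I fix $N\in\N$ with $b,d\in[a,c]_N$ and $x\in[b,d]_N$ simultaneously; the recursive clause defining $[a,c]_{N+1}$ then places $x$ in $[a,c]_{N+1}\subseteq[a,c]_\omega$, i.e.\ $[a,x,c]_\omega$. Preservation of R1 is immediate since $b\in[a,b]=[a,b]_1\subseteq[a,b]_\omega$. For R2 and R3 I would run routine inductions on $n$, uniform over all pairs: from R2 one obtains $[a,b]_n=[b,a]_n$ for every $n$, because the clause producing $[a,b]_{n+1}$ refers to $a,b$ only through the set $[a,b]_n$; from R3 one obtains $[a,a]_n\subseteq\{a\}$ for every $n$, because once $[a,a]_n\subseteq\{a\}$ the only admissible pair $c,d\in[a,a]_n$ is $c=d=a$, so the newly adjoined points again lie in $[a,a]_n\subseteq\{a\}$. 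Passing to unions gives $[a,b]_\omega=[b,a]_\omega$ and $[a,a]_\omega\subseteq\{a\}$, i.e.\ R2 and R3 for $(X,[\cdot,\cdot,\cdot]_\omega)$.

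For (b) the crux is a claim proved by induction on $n$, uniformly over all triples of points: if $f\colon(X,[\cdot,\cdot,\cdot])\to(W,[\cdot,\cdot,\cdot]_W)$ is monotone and $(W,[\cdot,\cdot,\cdot]_W)$ satisfies R4, then $x\in[a,b]_n$ implies $[f(a),f(x),f(b)]_W$. The case $n=1$ is precisely monotonicity of $f$. For the inductive step, suppose $x\in[a,b]_{n+1}\smallsetminus[a,b]_n$ and choose $c,d\in[a,b]_n$ with $x\in[c,d]_n$; the inductive hypothesis, applied to the pairs $(a,b)$, $(a,b)$ and $(c,d)$, yields $[f(a),f(c),f(b)]_W$, $[f(a),f(d),f(b)]_W$ and $[f(c),f(x),f(d)]_W$, whereupon R4 in $W$ delivers $[f(a),f(x),f(b)]_W$. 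Taking the union over $n$, every such $f$ is still monotone when regarded as a map $(X,[\cdot,\cdot,\cdot]_\omega)\to(W,[\cdot,\cdot,\cdot]_W)$. Specialising $W$ to $(Y,[\cdot,\cdot,\cdot]_\omega)$, which is an R4-relation by part (a), shows $\lfour$ is a well-defined functor on morphisms; and since the identity $X\to X$ is monotone as a map $(X,[\cdot,\cdot,\cdot])\to(X,[\cdot,\cdot,\cdot]_\omega)$, since a morphism of $\bt$ is determined by its underlying function, and since a short induction shows $\lfour$ fixes every relation already satisfying R4 (so that $\lfour[\bt]$ is exactly the full subcategory of R4-relations), this exhibits $\lfour\colon\bt\to\lfour[\bt]$ as left adjoint to the inclusion.

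Essentially all of this is bookkeeping; the one point deserving care is the induction in (b), which must be formulated for all triples at once so that it is available for the three pairs $(a,b)$, $(a,b)$, $(c,d)$ arising in a single recursive step, and whose one genuinely load-bearing ingredient is that axiom R4 in $W$ is exactly what converts ``$f(x)$ lies between $f(c)$ and $f(d)$, both of which lie between $f(a)$ and $f(b)$'' into ``$f(x)$ lies between $f(a)$ and $f(b)$''. The parallel remark for (a) is that $\omega$ iterations suffice because any finite configuration of witnesses already occurs at some finite stage, so one may always descend to a common index.
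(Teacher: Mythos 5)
Your proposal is correct and follows essentially the same route as the paper's proof: R4 for $[\cdot,\cdot,\cdot]_\omega$ via a common finite stage $N$, R3 via the induction $[a,a]_n\subseteq\{a\}$, and part (b) via the induction ``$x\in[a,b]_n\Rightarrow[f(a),f(x),f(b)]_W$'' whose inductive step is exactly one application of R4 in $W$. The only differences are that you spell out the R2 induction and the functoriality/unit bookkeeping that the paper leaves implicit, which is harmless.
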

\begin{proof} (a) First we show $(X,[\cdot,\cdot,\cdot]_\omega)$ is an R$_4$-relation: let $b,d\in[a,c]_\omega$ and $x\in[b,d]_\omega$ and, by design, let $n\in \N$ be any number so that $b,d\in[a,c]_n$ and $x\in[b,d]_n$. It follows that $x\in[a,c]_{n+1}$ and that $x\in[a,c]_\omega$. For (a), that $[\cdot,\cdot,\cdot]_\omega$ preserves R1 and R2 is clear and if $b\in [a,a]_1$ then $b=a$. Assume this is true up to $k\in \N$. If $b\in [a,a]_{k+1} $ then $b \in [a,a]_k$ (in which case we are done) or we can find $c,d \in [a,a]_k$ so that $b\in [c,d]_k$. Immediately we get that $a=c=b=d$. Thus, $[a,b,a]_\omega$ implies $b=a$ and that $[a,a]_\omega = \{a\}$ for all $a \in X$. Consequently, we have that $[\cdot,\cdot,\cdot]_\omega$ satisfies R3.  

Next, we show (b). Assume that for some R$_4$-relation $(W,[\cdot,\cdot,\cdot]_W)$ there exists a monotone $f:(X,[\cdot,\cdot,\cdot]) \to (W,,[\cdot,\cdot,\cdot]_W)$. We need only show that $f:(X,[\cdot,\cdot,\cdot]_\omega) \to (W,[\cdot,\cdot,\cdot]_W)$ is monotone also. For any $a,b,c\in X$, that $[a,b,c]_1$ implies $[a,b,c]$ and thus $[f(a),f(b),f(c)]_W$. Assume this to hold up to some $k\in \N$ and let $[a,b,c]_{k+1}$. Obviously, for $[a,b,c]_k$ we have our result. Otherwise, there exist $d,e \in [a,c]_k$ with $b \in [d,e]_k$. By the inductive hypothesis, $f(d),f(e) \in [f(a),f(c)]_W$ and $f(b) \in [f(d),f(e)]_W$ and since $[\cdot,\cdot,\cdot]_W$ satisfies R4 then $f(b) \in [f(a),f(c)]_W$. 
\end{proof}
In what follows we denote $L = L_4\circ L_3\circ L_2\circ L_1$.

\begin{corollary}\label{cor:joinsexplicit} The category $\br$ is a reflective subcategory of $\bt$ with reflector
 $L:\bt \to L[\bt]=\br$. In particular, for any collection $\{[\cdot,\cdot,\cdot]_i\mid i \in I\}$ of R-relations on a set $X$, $L$ gives an explicit form for their join in $R(X)$:
 \[
 \bigvee [\cdot,\cdot,\cdot]_i = L\left[\bigcup [\cdot,\cdot,\cdot]_i \right].
 \]
\end{corollary}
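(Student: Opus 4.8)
The plan is to build the reflector $L$ by pasting together the three adjunctions already produced in this section — namely $\ltwo\circ\lone$, $\lthree$ and $\lfour$, each exhibited above as a left adjoint \emph{onto its own image} — and then to extract the join formula from the universal property of the resulting reflection.

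For the first assertion I would first pin down the nested images of these closures. Since $\lone$ forces R1 and $\ltwo$ symmetrises while preserving R1, the image $\C_1:=\ltwo\circ\lone[\bt]$ is precisely the full subcategory of $\bt$ on the relations satisfying R1 and R2 (every such relation is a fixed point of $\ltwo\circ\lone$). By Lemma~\ref{lem:R3}, $\lthree$ preserves R1 and R2 and manufactures R3; moreover an R3 relation has trivial $\sim_\omega$ (a short induction: if $\sim_n$ is the identity, then in an admissible chain $\{[r_i,s_i,t_i]\}_{i\leq m}$ one gets $r_i=t_i$, whence R3 collapses each triple to a single point and the linking conditions force all these points equal, so $\sim_{n+1}$ is the identity too), hence $\lthree$ fixes every R1--R3 relation and $\lthree[\C_1]$ is exactly the full subcategory $\C_2\subseteq\C_1$ on the R1--R3 relations. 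Likewise, by Lemma~\ref{lem:R4}, $\lfour$ preserves R1--R3, produces R4, fixes every R-relation, and so $\lfour[\C_2]=\br$. Because every subcategory in sight is full, the adjunction isomorphisms restrict along the inclusions $\br\hookrightarrow\C_2\hookrightarrow\C_1\hookrightarrow\bt$, so
$$\bt\;\xrightarrow{\ \ltwo\circ\lone\ }\;\C_1\;\xrightarrow{\ \lthree\ }\;\C_2\;\xrightarrow{\ \lfour\ }\;\br$$
is a chain of left adjoints to the respective inclusions. A composite of left adjoints is a left adjoint and the composite of these three inclusions is the inclusion $\br\hookrightarrow\bt$; hence $L=\lfour\circ\lthree\circ\ltwo\circ\lone$ is left adjoint to $\br\hookrightarrow\bt$, i.e.\ $\br$ is reflective in $\bt$ with reflector $L$.

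For the join, fix a set $X$, a family $\{[\cdot,\cdot,\cdot]_i\mid i\in I\}$ in $R(X)$, and set $U=\bigcup_i[\cdot,\cdot,\cdot]_i$. A union of R1 (resp.\ R2, resp.\ R3) relations is again R1 (resp.\ R2, resp.\ R3) — for R3 note that $(a,b,a)\in U$ lies in some $[\cdot,\cdot,\cdot]_i$, forcing $a=b$ — so $U$ satisfies R1--R3, the closures $\lone,\ltwo,\lthree$ fix it, and therefore $L[U]=\lfour[U]$ is an R-relation whose underlying set is still $X$; thus $L[U]\in R(X)$ and $U\subseteq L[U]$, so $L[U]$ is an upper bound of the family in $R(X)$. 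If $[\cdot,\cdot,\cdot]'\in R(X)$ is any other upper bound, then $U\subseteq[\cdot,\cdot,\cdot]'$, so $\mathrm{id}_X$ is a monotone map $(X,U)\to(X,[\cdot,\cdot,\cdot]')$; since the reflection unit $U\to L[U]$ has underlying function the identity of $X$ and $[\cdot,\cdot,\cdot]'$ is an R-relation, the universal property of $L$ produces a mediating morphism $L[U]\to[\cdot,\cdot,\cdot]'$ whose underlying function is again $\mathrm{id}_X$, which says exactly $L[U]\subseteq[\cdot,\cdot,\cdot]'$. Hence $L[U]$ is the least upper bound, i.e.\ $\bigvee [\cdot,\cdot,\cdot]_i=L\bigl[\bigcup[\cdot,\cdot,\cdot]_i\bigr]$. (Equivalently: on the relations on $X$ satisfying R1--R3, $L$ coincides with the monotone closure operator $\lfour$, whose least fixed point above $U$ is $\lfour[U]$.)

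The only genuinely delicate point is the bookkeeping in the second paragraph: the three lemmas only give each closure as an adjoint onto its \emph{own} image, so one must check that these images nest correctly ($\br\subseteq\C_2\subseteq\C_1$), that each closure fixes the objects already satisfying the relevant axioms, and that fullness of all the subcategories lets the adjunction isomorphisms restrict — only then do the three adjunctions compose to a genuine reflection of $\bt$ onto $\br$. Everything else is routine.
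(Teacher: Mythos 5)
Your argument is correct and is exactly the proof the paper intends: the paper states this as an immediate corollary of composing the three left adjoints $\ltwo\circ\lone$, $\lthree$ and $\lfour$ from the preceding lemmas, offering no further details. You have merely supplied the bookkeeping the paper omits (nesting of the images, each closure fixing relations already satisfying the relevant axioms, and the identity-on-$X$ unit yielding the least-upper-bound property), all of which checks out.
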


In light of the above, notice that for any R-relation $(X,[\cdot,\cdot,\cdot]_X)$, a set $Y$ and a function $Y \to X$ there exsists a $U$-initial lift $(Y',[\cdot,\cdot,\cdot])$, where $U:\bt \to \bs$ denotes the forgetful functor and $Y'$ is the quotient of $Y$ dictated by $L$. Indeed, this is a simple consequence of the compositions $\br \leftrightarrows \bt \leftrightarrows \bs$ and that $\bt$ is topological over $\bs$. More to the point, that $U: \br \to \bs$ is fibred in lattices suggests the possibility of $\br$ being topological over $\bs$.

\begin{lemma}\label{lem:cones} Any cone $\left(Y \to U[(X_j,[\cdot,\cdot,\cdot]^j]\right)_{j\in J}$ has a $U$-initial lift and, thus, $\br$ is topological over $\bs$.
\end{lemma}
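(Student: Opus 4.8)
The plan is to produce the lift in two moves, resting on two facts already available: the construct $\bt$ is topological over $\bs$, and, by Corollary~\ref{cor:joinsexplicit}, $\br$ is a full reflective subcategory of $\bt$ with reflector $L = L_4\circ L_3\circ L_2\circ L_1$. First I would take the $\bt$-initial lift of the given source; then I would reflect it into $\br$ via $L$. The composite is the required lift, in the (quotient-indexed) sense indicated in the paragraph just before the statement.

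For the first move, given the source $\big(f_j\colon Y \to U[(X_j,[\cdot,\cdot,\cdot]^j)]\big)_{j\in J}$, put on $Y$ the ternary relation
\[
[a,b,c]_Y \iff [f_j(a),f_j(b),f_j(c)]^j \ \text{ for all } j\in J ,
\]
that is, the intersection $\bigcap_{j\in J}(f_j\times f_j\times f_j)^{-1}\big([\cdot,\cdot,\cdot]^j\big)$. This is the $\bt$-initial lift of the source: each $f_j$ is monotone on $(Y,[\cdot,\cdot,\cdot]_Y)$, and a function from a $\bt$-object into $(Y,[\cdot,\cdot,\cdot]_Y)$ is monotone exactly when all of its composites with the $f_j$ are. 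I would then note that $[\cdot,\cdot,\cdot]_Y$ already satisfies R1, R2 and R4 --- each of these is a universally quantified Horn sentence in which no equality occurs, hence is inherited both by inverse images and by intersections --- so the only axiom that can fail is R3, and it fails exactly when there are distinct $a,b\in Y$ with $f_j(a)=f_j(b)$ for every $j$.

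For the second move, apply $L$. Since $L_1$ and $L_2$ leave $[\cdot,\cdot,\cdot]_Y$ unchanged (R1 and R2 hold) and $L_4$ alters no underlying set, the underlying set of $L[(Y,[\cdot,\cdot,\cdot]_Y)]$ is $Y':=Y/{\sim_\omega}$, where $\sim_\omega$ is computed from $[\cdot,\cdot,\cdot]_Y$, and the reflection unit is the quotient map $q\colon Y\to Y'$. The crux is that $\sim_\omega$ coincides with the joint-kernel equivalence $a\approx b :\Leftrightarrow f_j(a)=f_j(b)$ for all $j$: the inclusion $\approx\ \subseteq\ \sim_\omega$ is immediate from the description of R3-failure above, and $\sim_\omega\ \subseteq\ \approx$ is exactly the well-definedness argument from the proof of Lemma~\ref{lem:R3}(b), applied to each monotone map $f_j$ into the R3-object $(X_j,[\cdot,\cdot,\cdot]^j)$. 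Hence every $f_j$ factors uniquely through $q$ as an $\br$-morphism $\bar f_j\colon (Y',[\cdot,\cdot,\cdot]_{Y'})\to(X_j,[\cdot,\cdot,\cdot]^j)$, and one checks $[a',b',c']_{Y'} \Leftrightarrow [\bar f_j(a'),\bar f_j(b'),\bar f_j(c')]^j$ for all $j$. It then remains to verify the universal property of $\big((Y',[\cdot,\cdot,\cdot]_{Y'}),(\bar f_j)_j\big)$ in the sense of the paragraph before the statement: given an $\br$-object $(B,[\cdot,\cdot,\cdot]_B)$ and a function $h\colon U[B]\to Y$ such that each $f_j\circ h$ underlies an $\br$-morphism, $\bt$-initiality of $(Y,[\cdot,\cdot,\cdot]_Y)$ makes $h$ underlie a $\bt$-morphism $B\to(Y,[\cdot,\cdot,\cdot]_Y)$; composing with the $\bt$-morphism $q$ gives a $\bt$-morphism between $\br$-objects, hence an $\br$-morphism $\bar h$ with $U[\bar h]=q\circ h$, and uniqueness follows from faithfulness of the forgetful functor.

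The hard part --- in fact the only non-formal point --- is the bookkeeping forced by the set-theoretic nature of R3: the initial lift will in general sit over a proper quotient $Y'$ of $Y$ rather than over $Y$ itself (so ``$U$-initial lift'', and hence ``topological'', must be read in the amended $L$-quotient sense of the preceding paragraph), and the one piece of genuine content is the identification $\sim_\omega = \approx$, which is precisely what lets the $f_j$ descend and makes the universal property run. Everything else --- stability of R1, R2, R4 under intersections of pullbacks, the reduction of $L$ to the $L_3$-quotient on this particular structure, and the factorization through the reflection --- is routine.
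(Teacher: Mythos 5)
Your route is genuinely different from the paper's. The paper never leaves the underlying set $Y$: for each $j$ it forms, inside the complete lattice $R(Y)$, the join of all R-relations on $Y$ making $f_j$ monotone (computed via Corollary~\ref{cor:joinsexplicit}), and then takes the meet of these over $j$; concretely this yields $\bigcap_{j}(f_j\times f_j\times f_j)^{-1}\bigl([\cdot,\cdot,\cdot]^j\bigr)\cap Y_\top$, the largest R-relation on $Y$ itself making every $f_j$ monotone. You instead take the $\bt$-initial lift and then reflect it with $L$, which replaces $Y$ by the quotient $Y/{\sim_\omega}$; your identification of $\sim_\omega$ with the joint kernel of the $f_j$ is correct and is the genuine content of your argument. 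The two constructions agree exactly when the $f_j$ jointly separate points (in which case your second move is vacuous); the interesting case is when they do not, and there they produce different objects.

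In that case the divergence matters, and neither construction delivers the lemma as literally stated. Your lift sits over $Y'=Y/{\sim_\omega}$ rather than over $Y$, so it is not a $U$-initial lift in the standard sense (the forgetful functor applied to the lift must return $Y$ on the nose), and hence it cannot witness topologicity of $U:\br\to\bs$ as usually defined; you flag this, but the ``amended $L$-quotient sense'' is a different statement, not a reading of this one. Conversely, the paper's on-$Y$ structure fails the universal property that its final sentence claims is ``easily verified'': take $Y=\{y_1,y_2\}$, let $X_1=\{x_0\}$ carry its unique R-relation and $f_1$ be constant, so the candidate initial structure is $Y_\top$; now let $B=\{p,q,r\}$ carry the R-relation $B_\bot\cup\{(p,q,r),(r,q,p)\}$ and set $h(p)=h(r)=y_1$, $h(q)=y_2$. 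Then $f_1\circ h$ is monotone but $h$ is not monotone into $(Y,Y_\top)$, and since $Y_\top$ is the largest R-relation on $Y$, no initial lift over $Y$ exists at all. The obstruction is exactly R3, which plays the role of the $T_0$ axiom over topological spaces: your instinct to quotient is the right repair, but it changes the statement being proved rather than proving this one.
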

\begin{proof}
This is a direct consequence of the previous corollary. For all $j\in J$ let 
\[
[\cdot,\cdot,\cdot]_j = \bigvee \{ [\cdot,\cdot,\cdot]\in R(Y) \mid  (Y,[\cdot,\cdot,\cdot]) \to (X_j,[\cdot,\cdot,\cdot]^j) \text{ is monotone}\}.
\]
\noindent
It follows that for any fixed $i\in J$, $(Y, [\cdot,\cdot,\cdot]_i) \to (X_i,[\cdot,\cdot,\cdot]^i)$ is monotone. So as to satisfy all $Y \to U[(X_j,[\cdot,\cdot,\cdot]^j$, we take a meet in $R(Y)$:
\[
[\cdot,\cdot,\cdot] = \bigwedge_{j\in J} [\cdot,\cdot,\cdot]_j.
\] 
\noindent
Again, $(Y,[\cdot,\cdot,\cdot]) \to (X_j,[\cdot,\cdot,\cdot]^j)$ is monotone for all $j\in J$. The remaining universal properties of $U$-initial lifts can be easily verified for $(Y,[\cdot,\cdot,\cdot])$.
\end{proof}

Let ${\bf CMLat}$ denote the collection of complete lattices with arbitrary  meet  preserving  and  nonempty  join preserving  functions. 

\begin{corollary} The assignment $X \to R(X)$ with $f:X\to Y \mapsto f_*:R(Y) \to R(X)$ where for $[\cdot,\cdot,\cdot] \in R(Y)$
\[
f_*([\cdot,\cdot,\cdot]) = \bigvee\{[\cdot,\cdot,\cdot]'\in R(X) \mid f: [\cdot,\cdot,\cdot]' \to [\cdot,\cdot,\cdot] \text{ is monotone}\},
\]

\noindent
defines a contravariant functor $\bs \to {\bf CMLat}$ and a Grothendieck fibration on $\br \to \bs$.
\end{corollary}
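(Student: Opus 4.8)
The statement packages two facts: that $U\colon\br\to\bs$ is a Grothendieck fibration whose reindexing functors are the $f_*$, and that $X\mapsto R(X)$, $f\mapsto f_*$ is a contravariant functor into ${\bf CMLat}$. My plan is to get the fibration straight out of Lemma~\ref{lem:cones} and to reduce the functor claim to a compatibility between triplewise preimages and the reflector $L$. For the fibration, fix $f\colon X\to Y$ in $\bs$ and an object $(Y,[\cdot,\cdot,\cdot]^Y)$ of $\br$, and apply Lemma~\ref{lem:cones} to the one-morphism cone $f\colon X\to U[(Y,[\cdot,\cdot,\cdot]^Y)]$: the $U$-initial lift it produces is, by construction, the R-relation $f_*([\cdot,\cdot,\cdot]^Y)$ defined in the statement together with a monotone map to $(Y,[\cdot,\cdot,\cdot]^Y)$ over $f$ (that this join of R-relations is itself an R-relation rendering $f$ monotone is exactly what the proof of that lemma checks, using Corollary~\ref{cor:joinsexplicit} and the fact that a union of R-relations already satisfies R1--R3, so $L$ acts on it as $L_4$ and does not alter the underlying set). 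Since $U$ is faithful, a $U$-initial morphism over $f$ is $U$-cartesian, so $U$ is a fibration with reindexing $f_*$. Functoriality is then formal: $(\mathrm{id}_X)_*=\mathrm{id}_{R(X)}$, and since composites of cartesian morphisms are cartesian while the fibres $R(X)$ are posets, the canonical comparison is an identity and $(g\circ f)_*=f_*\circ g_*$ (equivalently, both sides are the largest R-relation on $X$ making $g\circ f$ monotone into the given relation on $Z$). This gives a genuine functor $\bs^{\mathrm{op}}\to\mathbf{Pos}$, and since each $R(X)$ is a complete lattice what remains is to see that every $f_*$ preserves arbitrary meets and nonempty joins.

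For the lattice claims, write $f^{\leftarrow}([\cdot,\cdot,\cdot]):=\{(a,b,c)\in X^3\mid(f(a),f(b),f(c))\in[\cdot,\cdot,\cdot]\}$: monotonicity of $f$ into $[\cdot,\cdot,\cdot]$ means precisely containment in $f^{\leftarrow}([\cdot,\cdot,\cdot])$, so $f_*([\cdot,\cdot,\cdot])$ is the largest R-relation contained in $f^{\leftarrow}([\cdot,\cdot,\cdot])$. Meet-preservation is immediate from this: $f^{\leftarrow}$ commutes with intersections, meets in $R(Y)$ are intersections, and an intersection of R-relations is again one, so both $f_*\bigl(\bigwedge_k[\cdot,\cdot,\cdot]_k\bigr)$ and $\bigwedge_k f_*([\cdot,\cdot,\cdot]_k)$ are the largest R-relation sitting inside $\bigcap_k f^{\leftarrow}([\cdot,\cdot,\cdot]_k)$ (the top $Y_\top$ being handled directly from $f^{\leftarrow}(Y_\top)$).

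The preservation of nonempty joins is where I expect the real work, and the main obstacle. By Corollary~\ref{cor:joinsexplicit}, $\bigvee_k[\cdot,\cdot,\cdot]_k=L\bigl[\bigcup_k[\cdot,\cdot,\cdot]_k\bigr]=L_4\bigl[\bigcup_k[\cdot,\cdot,\cdot]_k\bigr]$ (again because a union of R-relations already has R1--R3), so $f_*\bigl(\bigvee_k[\cdot,\cdot,\cdot]_k\bigr)$ is the largest R-relation inside $f^{\leftarrow}\bigl(L_4[\bigcup_k[\cdot,\cdot,\cdot]_k]\bigr)$, whereas $\bigvee_k f_*([\cdot,\cdot,\cdot]_k)=L_4\bigl[\bigcup_k f_*([\cdot,\cdot,\cdot]_k)\bigr]$. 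One inclusion is cheap: each $f_*([\cdot,\cdot,\cdot]_k)\subseteq f^{\leftarrow}([\cdot,\cdot,\cdot]_k)\subseteq f^{\leftarrow}\bigl(\bigvee_k[\cdot,\cdot,\cdot]_k\bigr)$, and the latter relation is R4-closed since the triplewise preimage of an R4-relation is again R4, whence $\bigvee_k f_*([\cdot,\cdot,\cdot]_k)\le f_*\bigl(\bigvee_k[\cdot,\cdot,\cdot]_k\bigr)$. For the reverse inclusion the plan is to show that any betweenness produced in $Y$ by the iterated interval construction $[\cdot,\cdot]_\omega$ applied to $\bigcup_k[\cdot,\cdot,\cdot]_k$ is already reflected, along $f$, by the same construction applied to $\bigcup_k f_*([\cdot,\cdot,\cdot]_k)$; I would run an induction on the interval rank $n$, reducing at stage $n+1$ the statement $f(b)\in[f(a),f(c)]_{n+1}$ to membership of $b$ in an $X$-interval of bounded rank. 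The subtlety --- and the reason this does not follow merely from $f^{\leftarrow}$ commuting with unions --- is that a chain realising betweenness in $Y$ may route through points outside $f[X]$; pinning down this interaction between the full preimage $f^{\leftarrow}$ and the R4-closure $L_4$ is, I expect, the heart of the argument.
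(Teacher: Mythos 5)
Your handling of the fibration, of functoriality, and of nonempty meets is sound and in one respect more careful than the paper's own proof: you correctly describe $f_*([\cdot,\cdot,\cdot])$ as the \emph{largest R-relation contained in} the triplewise preimage $f^{\leftarrow}([\cdot,\cdot,\cdot])$, whereas the paper's proof asserts the identification $(a,b,c)\in f_*([\cdot,\cdot,\cdot])\Leftrightarrow (f(a),f(b),f(c))\in[\cdot,\cdot,\cdot]$, which violates R3 whenever $f$ is not injective. (For the same reason your parenthetical claim that the top is ``handled directly'' is wrong: if $f(a)=f(c)\neq f(b)$ with $a\neq c$ then $(a,b,c)\in X_\top\smallsetminus f^{\leftarrow}(Y_\top)$, so $f_*(Y_\top)\subsetneq X_\top$ and the empty meet is not preserved; only nonempty meets survive your argument.)

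The genuine gap is exactly where you flagged it, namely the inclusion $f_*\bigl(\bigvee_k[\cdot,\cdot,\cdot]_k\bigr)\le\bigvee_k f_*([\cdot,\cdot,\cdot]_k)$, and your proposed induction on interval rank cannot close it, because the obstruction you name --- chains realising betweenness in $Y$ that route through points outside $f[X]$ --- is fatal rather than technical. Take $Y=\{0,1,2,3\}$, $X=\{0,1,2\}$, $f$ the inclusion, $[\cdot,\cdot,\cdot]_1=Y_\bot\cup\{(0,3,2),(2,3,0)\}$ and $[\cdot,\cdot,\cdot]_2=Y_\bot\cup\{(0,1,3),(3,1,0)\}$; both are R-relations (generated by road systems whose only nontrivial roads are $\{0,2,3\}$ and $\{0,1,3\}$ respectively). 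Each meets $X^3$ in $X_\bot$, so $f_*([\cdot,\cdot,\cdot]_1)=f_*([\cdot,\cdot,\cdot]_2)=X_\bot$ and $\bigvee_k f_*([\cdot,\cdot,\cdot]_k)=X_\bot$. But in the union on $Y$ one has $3\in[0,2]$ and $1\in[0,3]$, so the $L_4$ iteration puts $1\in[0,2]_2$, i.e.\ $(0,1,2)\in\bigvee_k[\cdot,\cdot,\cdot]_k$; hence the linear R-relation $X_\bot\cup\{(0,1,2),(2,1,0)\}$ on $X$ maps monotonically into the join and $f_*\bigl(\bigvee_k[\cdot,\cdot,\cdot]_k\bigr)\neq X_\bot$. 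So nonempty joins are not preserved and no induction will deliver the reverse inclusion. The paper gives you nothing to recover here --- its proof simply asserts that ``one can easily verify'' preservation of nonempty joins and meets --- so the defect lies in the statement itself rather than in your reading of it; what your argument actually establishes is a functor into complete lattices and nonempty-meet-preserving maps.
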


\begin{proof} Denote this assignment by $F_*$. By design, one can easily verify that with the exception of empty joins (consider any constant function for a counterexample here), any $f_*$ preserves all other joins and meets and $F_*$ is well defined. Next we show functoriality of $F_*$: Lemma~\ref{lem:cones} demonstrates that for any $[\cdot,\cdot,\cdot] \in R(Y)$ and $f:X\to Y$ then R-relation $f_*([\cdot,\cdot,\cdot])\in R(X)$ is the finest making $f$ monotone. Moreover, given $h:X \to Z = (g:Y\to Z)\circ (f:X\to Y)$ and $[\cdot,\cdot,\cdot] \in R(Z)$ then Corollary~\ref{cor:joinsexplicit} yields $(a,b,c) \in h_*([\cdot,\cdot,\cdot])$ precisely when $(h(a), h(b), h(c)) \in [\cdot,\cdot,\cdot]$. Hence,
\begin{align*}
(a,b,c) \in f_*\circ g_* ([\cdot,\cdot,\cdot]) &\Longleftrightarrow (a,b,c) \in f_*\left( g_* ([\cdot,\cdot,\cdot])\right)\\
						               &\Longleftrightarrow   (f(a),f(b),f(c)) \in g_* ([\cdot,\cdot,\cdot])\\
						               &\Longleftrightarrow ((g \circ f)(a),(g \circ f)(b),(g \circ f)(c)) \in[\cdot,\cdot,\cdot]\\
						               &\Longleftrightarrow (h(a), h(b), h(c)) \in [\cdot,\cdot,\cdot].
\end{align*}
Thus, $F_*$ is functorial. Lastly, one can easily verify that the forgetful functor $\br \to \bs$ is cartesian, and the proof is complete.
\end{proof}

\begin{remark} The functor $\br \to {\bf Set}$ is not a opfibration. This fact can be appreciated by noting that the axiom R3 is distinct from the other axioms in the sense that unlike the latter, R3 closures of objects in $\bt$ inevitably modify the underlying set (not just the relation). Consequently, in general, the dual assignment to $F_*$ is not functorial.
\end{remark}

\subsection{Fra\"{i}ss\'{e} property} The question whether a given class of finite objects is a Fra\"{i}ss\'{e} class is connected to the existence of universal homogeneous objects for related classes via Fra\"{i}ss\'{e}'s amalgamation theorem. While the theory was developed in the 1950's there has been a great deal of work recently on developing further connections between Fra\"{i}ss\'{e}'s theory of finite structures and their amalgamation properties with Ramsey theory and amenability of the automorphism groups of the countable homogeneous structures. For a categorical view of Fra\"{i}ss\'{e} structures, see \cite{MR2140630} and see the paper \cite{MR3244668} for a systematic study of the connections between Fra\"{i}ss\'{e} theory, topological dynamics and Ramsey theory. 

Recall that a countable relational structure $M$ is said to be {\bf homogeneous}, if for every
isomorphism $ U \to V$ between finite substructures $U$ and $V$ of $M$, there is an automorphism of $M$ extending $ U \to V$. For example, in the category of linear orders and order preserving maps, $(\Q,<)$ is one such structure. Letting Age($M$) represent the collection of all finite substructures of $M$, Fraiss\'{e}'s Theorem \cite{MR0057220} proves that homogeneity of $M$ implies that Age($M$):
\begin{enumerate}[(a)]
\item is closed under isomorphism and substructure;
\smallskip
\item has countably many members up to isomorphism;
\smallskip
\item has the {\bf joint embedding property} (JEP): if $U, V \in$ Age($M$) then there is $W \in$ Age($M$) such that both $U$ and $V$
embed in $W$; and
\smallskip
\item has the {\bf amalgamation property} (AP): whenever $A, B_1, B_2 \in C$ and
$f_i : A \to B_i$ (for $i = 1, 2$) are embeddings, there is $C\in$ Age($M$) and embeddings $g_i
: B_i \to C$ (for $i = 1, 2$) such that $g_1 \circ f_1 = g_2 \circ f_2$.
\end{enumerate}

Conversely, Fra\"{i}ss\'{e} also showed that given a class ${\mathcal C}$ of finite structures over a finite language $L$, if ${\mathcal C}$ is closed under isomorphisms and substructures, has countably many members up to isomorphism, and has the (JEP) and (AP), then there is a unique (up to isomorphism) countable homogeneous structure $M$ such that Age$(M) = {\mathcal C}$. Such a class ${\mathcal C}$ is called a Fra\"{i}ss\'{e} class and the resulting homogeneous object the Fra\"{i}ss\'{e} limit of the class. And indeed, $({\mathbb Q},\leq)$ is precisely the Fra\"{i}ss\'{e} limit of the class of finite linear orders. Two other important examples of such classes (and their limits) are the class of finite metric space, whose limit is the Urysohn space, and the class of finite graphs, whose limit is the Rado graph. 

Our main observation here is that Lemma \ref{lem:cones} implies that the class of finite R-structures has both the (JEP) and the (AP). And since this class is obviously closed under isomorphism, substructure, and has countably many isomorphism classes, we obtain:

\begin{theorem} The class of finite R-structures is a Fra\"{i}ss\'{e} class, hence has a Fra\"{i}ss\'{e} limit. 
\end{theorem}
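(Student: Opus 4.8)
We must check the four conditions in Fra\"iss\'e's theorem. Conditions (a) and (b) are immediate: the axioms R1--R4 are universal, so a substructure of an R-structure (with the induced ternary relation) is again an R-structure, the class is obviously closed under isomorphism, and there are only countably many finite relational structures in a single ternary predicate up to isomorphism. Since the empty structure is an R-structure, (JEP) is the special case of (AP) in which the apex is empty, so it suffices to prove (AP). The plan is to produce amalgams as pushouts in $\br$ --- these exist because $\br$ is topological over $\bs$, hence cocomplete, by Lemma~\ref{lem:cones} --- and then to verify the step that genuinely uses the R-axioms: that the maps $B_i\to C$ into the amalgam are not merely monotone but are embeddings of relational structures.

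So let $f_i\colon A\to B_i$ ($i=1,2$) be embeddings of finite R-structures; I would identify $A$ with a common substructure, so that $B_1\cap B_2=A$, let $C_0=B_1\cup B_2$ be the pushout of $B_1\leftarrow A\rightarrow B_2$ in $\bs$, and let $C$ be the pushout in $\br$. By Corollary~\ref{cor:joinsexplicit}, $L$ is the reflector $\bt\to\br$, so $C$ is obtained by applying $L$ to $C_0$ carrying the ``free amalgam'' relation $[\cdot,\cdot,\cdot]_{B_1}\cup[\cdot,\cdot,\cdot]_{B_2}$ (when the apex is empty this is just the disjoint union of the two relations on $B_1\sqcup B_2$, which is the (JEP) construction). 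A preliminary point is that $C$ has underlying set $C_0$, hence is finite: applying $\ltwo\circ\lone$ to the free amalgam produces no triple of the form $[r,s,r]$ with $r\neq s$, since such a triple would have to come from $[\cdot,\cdot,\cdot]_{B_1}$ or $[\cdot,\cdot,\cdot]_{B_2}$ (both satisfying R3) or from the reflexivity and symmetry triples adjoined by $\lone$ and $\ltwo$ (none of that shape); hence the equivalence relation $\sim_\omega$ of Lemma~\ref{lem:R3} is trivial, $\lthree$ collapses nothing, and $\lfour$ preserves R3 by Lemma~\ref{lem:R4}. The maps $u_i\colon B_i\to C$ are then monotone, injective, and satisfy $u_1f_1=u_2f_2$ on underlying sets, so everything comes down to showing that each $u_i$ reflects the relation; equivalently, writing $[p,q]_C$ for the Bankston interval computed in $C$, that $[p,q]_C\cap B_i=[p,q]_{B_i}$ whenever $p,q\in B_i$.

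This last statement is the heart of the argument and the step I expect to be the main obstacle, because it is exactly where transitivity (R4) and the interaction between the two halves of the amalgam have to be controlled. I would prove it by induction on the stage $n$ in $[p,q]_C=\bigcup_n[p,q]_n$, carried out for all pairs of points simultaneously and with an inductive hypothesis strong enough to track how an interval ``weaves'' between $B_1$ and $B_2$: the structural input is that an excursion of the interval-building process out of $B_1$ lands in $B_2$ and can re-enter $B_1$ only at points of $A$, together with the compatibility $[\cdot,\cdot,\cdot]_{B_1}\cap A^3=[\cdot,\cdot,\cdot]_A=[\cdot,\cdot,\cdot]_{B_2}\cap A^3$. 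Concretely, when a point $x$ is adjoined to $[p,q]_{n+1}$ because $x\in[c,d]_n$ for some $c,d\in[p,q]_n$, one argues by cases on where each of $c,d,x$ sits among $B_1\setminus A$, $A$, $B_2\setminus A$ --- the inductive hypothesis pins down $[c,d]_n$ in each case (in particular $[c,d]_n=\{c,d\}$ when $c$ and $d$ lie on opposite sides) --- and one concludes by applying R4 inside $B_1$ or inside $B_2$. Once this is established, $u_1$ and $u_2$ are embeddings of $B_1$ and $B_2$ into the finite R-structure $C$ with $u_1f_1=u_2f_2$, which is (AP); together with (a)--(d), Fra\"iss\'e's theorem produces the Fra\"iss\'e limit.
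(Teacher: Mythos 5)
You are right that the paper's entire argument for this theorem is the assertion that cocompleteness of $\br$ (via Lemma~\ref{lem:cones}) yields (JEP) and (AP), and you correctly identify the missing content: the pushout legs $B_i\to C$ must be shown to be \emph{embeddings}, not merely monotone maps. Your reduction of everything to the claim $[p,q]_C\cap B_i=[p,q]_{B_i}$ is also the right way to isolate the issue. Unfortunately that claim is false, and the failure is precisely the mechanism you describe and then wave off: an excursion of the interval-building process through $B_2\setminus A$ may re-enter $A$ at a point that was \emph{not} in the original $B_1$-interval.

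Concretely, let $A=\{c,e,y,t\}$ carry the minimal relation $A_\bot$; let $B_1=A\cup\{p,q\}$ with relation $(B_1)_\bot\cup\{[p,c,q],[p,e,q],[p,y,q]\}$ (symmetrized); and let $B_2=A\cup\{z\}$ with relation $(B_2)_\bot\cup\{[c,z,e],[z,t,y]\}$ (symmetrized). Both are R-structures (they come from the road systems whose only nontrivial roads are $\{p,q,c,e,y\}$ in $B_1$, and $\{c,e,z\}$, $\{z,y,t\}$ in $B_2$; R1--R4 are routine to check), and both induce $A_\bot$ on $A$, so $A$ is a common substructure. In the $L_4$-closure $C$ of the free amalgam one gets $c,e\in[p,q]_1$, hence $z\in[c,e]_1\subseteq[p,q]_2$, hence $t\in[z,y]_1\subseteq[p,q]_3$; thus $[p,t,q]$ holds in $C$ while $t\notin[p,q]_{B_1}$, so $B_1\to C$ does not reflect the relation. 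Worse, this is not an artifact of choosing the pushout: if $C$ is \emph{any} R-structure with relation-preserving maps $g_1,g_2$ out of $B_1,B_2$ agreeing on $A$ (write $g$ for their common restriction), then R4 applied to $[g_1(p),g(c),g_1(q)]$, $[g_1(p),g(e),g_1(q)]$, $[g(c),g_2(z),g(e)]$ gives $[g_1(p),g_2(z),g_1(q)]$, and R4 applied again with $[g_1(p),g(y),g_1(q)]$ and $[g_2(z),g(t),g(y)]$ gives $[g_1(p),g(t),g_1(q)]$; so $g_1$ can never be an embedding. Hence this triple $(A,B_1,B_2)$ admits no amalgam at all: with the standard model-theoretic notions of (induced) substructure and embedding that the paper sets up, (AP) fails for the class of finite R-structures, and neither your argument nor the paper's can be repaired without changing those notions (for instance, restricting attention to interval-closed substructures, for which the $A$ above is not a substructure of $B_2$).
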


\section{Antisymmetry and complete distributivity}\label{sec:sepanddist}

 An {\bf antisymmetric} R-relation is one for which $[a,b,c]$ and $[a,c,b]$ implies $b=c$. Antisymmetric R-relations abound in mathematics and can be found in areas ranging from lattice theory to continuum topology  (see \cite{pre06252969}, \cite{MR1383662} and \cite{MR1120620}). This section is concerned with studying these relations from a lattice-theoretic perspective and exposing them as complete and distributive closures of complete lattices. We first present antisymmetric closures of R-relations and in passing address an issue raised in \cite{pre06252969}, where the author asks whether or not these closures exist. Order-theoretically, in the following section we show it represents a reflector between complete lattices and complete and distributive counterparts.

\subsection{Antisymmetry.}

Let $\ba$ denote the full subcategory of $\br$ whose objects are antisymmetric R-relations. The reader can verify
that $\ba$ is complete and contains all coproducts and all such construction
agree with those in $\br$. It also has all coequalizers, but those are not as
simple to illustrate and will be constructed in the sequel. Interpreted within a categorical setting, in \cite{pre06252969} (pg. 15) the
author proves no right adjoint $\ba \hookrightarrow \br$ can exist. In light of this, it is
natural to question the existence of a left adjoint and this is precisely what
we demonstrate to exist, in what follows. Consider any ternary relation
$(X,[\cdot,\cdot,\cdot])$. We define the following equivalence relation $\sim$ on $X$ recursively as follows:

\begin{itemize}
\item $a\sim_0b \Leftrightarrow a=b$
\medskip
\item $a\sim_{r+1} b \Leftrightarrow a\sim_r b$ or $ \exists \{[m_i, x_i\,y_i]
\mid i\leq n \in \N\}$ for which
\medskip
\begin{itemize}
\item $a\sim_r p \in \{x_0,y_0\}$ and $b\sim_r p \in \{x_n,y_n\}$, 
\medskip
\item $m_i \sim_r m_{i+1}$, $x_i \sim_r x_{i+1}$ and $y_i \sim_r y_{i+1}$  for
$i$ odd, and
\medskip
\item for some $p \in \{y_i, x_i\}$ and $q\in \{y_{i+1},x_{i+1}\}$ we have
$p\sim_r q$ for $i$ even.
\end{itemize}
\end{itemize}
\noindent
We demand for $a\sim b$ provided $a\sim_n b$ for some $n$ and put
$X/{\sim} = Y$; it's simple to verify that $\sim_n$ is an
equivalence relation on $X$ for each $n\in\N$ and thus $\sim$ is also.
This process only defines the underlying set of the antisymmetrised relation
while the ternary relation is the canonical one: $[a,b,c]_Y$ provided that
$[x,y,z]$ for some (any) $x\in a$, $y\in b$ and $z\in c$. The ternary relation $(Y,
[\cdot,\cdot,\cdot]_Y)$ is antisymmetric: for if $[a,b,c]_Y, [a,c,b]_Y$ then we can find $x_1,x_2 \in a$, $y_1,y_2\in b$ and
$z_1,z_2 \in c$ so that $[x_1,y_1,z_1]$ and $[x_2,z_2,y_2]$. Denote $L_A$ the
operator for the above defined process. It clearly preserves R1, R2, and it also preserves R3 when
R1 is involved. It might not preserve R4 and, as the following example
illustrates, $L_4$ might not preserve antisymmetry either. 

\begin{example} Begin with a set $X= \{ a,b,c,x,d_1,d_2,y\}$ and 
$$[\cdot,\cdot,\cdot] = X_\bot \cup \{[a,b,c], [a,d_1,c],[b,x,d_2],[y,d_1,d_2],[y,d_2,d_1],[a,c,d_2],[a,c,x]\}.$$
 Notice that the point $y$ and its relation to $d_1$ and $d_2$ makes $[\cdot,\cdot,\cdot]$ not antisymmetric. Applying $L_A$ glues $d_1$ and $d_2$ together to a point $d$ and generates 
 \[[\cdot,\cdot,\cdot]_1 = Y_\bot \cup \{[a,b,c], [a,d,c],[b,x,d],[a,c,x]\}
 \]
 
 \noindent
  where $Y= \{ a,b,c,x,d,y\}.$
  This is not an R4 relation since $[a,x,c]$ is not present. Applying $L_4$ then leaves the underlying set unchanged but modifies the ternary relation to 
 \[
 [\cdot,\cdot,\cdot]_2 = Y_\bot \cup \{[a,b,c], [a,d,c],[b,x,d],[a,c,x], [a,x,c]\}
 \]
 \noindent
   by adding $[a,x,c]$. Again, we witness a non-antisymmetric R-relation because $[a,c,x], [a,x,c]$ are present. Applying $L_A$ once again then finishes the process by gluing $x$ to $c$ leaving an antisymmetric R-relation. 

The above scenario can be extended so that the composition $L_A\circ L_4\circ L_A$, as previously described, will not yield an antisymmetric R-relation. Indeed, to $(X,[\cdot,\cdot,\cdot])$ as initially described, consider adjoining the antisymmetric R-relation $X' = \{ a',b',c',x',d'_1,d'_2\}$ with relation 
\[
[\cdot,\cdot,\cdot]' = X'_\bot \cup \{[a',b',c'], [a',d'_1,c'],[b',x',d'_2],[a',c',d'_2],[a',c',x']\}
\]
\noindent
as follows: let $Z = X\cup X'$ with $[\cdot,\cdot,\cdot]_Z = Z_\bot \cup [\cdot,\cdot,\cdot] \cup [\cdot,\cdot,\cdot]' \cup \{[x,d'_1,d'_2],[c,d'_2,d'_1]\}$. Applying  $L_A\circ L_4\circ L_A$ to $(Z,[\cdot,\cdot,\cdot]_Z)$ only modifies $X$ and its ternary relation, since $(X',[\cdot,\cdot,\cdot]')$ was designed as an antisymmetric R-relation and the relationship between points from $X$ and $X'$ violate neither R4 nor antisymmetry. However, the last $L_A$ glues $x$ and $c$ together to a point $y'$ playing the same role $y$ did with $d_1$ and $d_2$ but with $d'_1$ and $d'_2$ instead, thus we find ourselves in much the same scenario as in the beginning of the example necessitating several uses of $L_4$ and $L_A$ to yield an antisymmetric R-relation. The reader can then appreciate how, carefully, appending an infinite countable number of copies of $(X',[\cdot,\cdot,\cdot]')$ can yield a scenario where no finite use of $L_A$ after $L_4$ will ever yield antisymmetric R-relation. \hspace{1.6 in} $\square$

\end{example}

The previous example is interesting in that it suggests a
direct limit of the functors $L_A$ and $L_4$. That is, what we show defines the
left adjoint of $\ba \hookrightarrow \bt$ is actually the direct limit of
applying $L_4$ after $L_A$, $\omega$-times. For any R-relation
$(X,[\cdot,\cdot,\cdot])$ let $(X_1,[\cdot,\cdot,\cdot]_A^1) =
L_A[(X,[\cdot,\cdot,\cdot])]$, $(X_1,[\cdot,\cdot,\cdot]_1) =
L_4[(X_1,[\cdot,\cdot,\cdot]_A^1)]$, and in general 
\[(
X_n,[\cdot,\cdot,\cdot]_n^A) = L_A[(X_{n-1},[\cdot,\cdot,\cdot]_{n-1})] \mbox{ and }
\]
 \[
 (X_n,[\cdot,\cdot,\cdot]_n) =
L_4[(X_{n-1},[\cdot,\cdot,\cdot]_A^{n-1})].
\]
 By design, we have: 
 \medskip
 
\xymatrix{ 
(X,[\cdot,\cdot,\cdot]) \ar@{.>}[rd]\ar[r]^{q_1} & (X_1,[\cdot,\cdot,\cdot]_A^1) \ar[d] & & &\\
& (X_1,[\cdot,\cdot,\cdot]_1)\ar[r]^{q_2} \ar@{.>}[rd]& \ldots \ar[d] & & \\
& & (X_{n-1},[\cdot,\cdot,\cdot]_{n-1}) \ar[r]^{q_n} \ar@{.>}[rd]& (X_n,[\cdot,\cdot,\cdot]_A^n) \ar[d] & \\
 & & & \ldots
}

\medskip

\noindent
where the maps are identities and quotients, and thus monotone. Next, for any
$a,b\in X$, let $a\sim_A b$ provided that for some $n\in \N$, the quotient map
$q_n: X \to X_{n}$ glues the points together; $q_n(a) = q_n(b)$. Obviously,
$\sim_A$ is an equivalence relation so let $X_A = X/{\sim_A}$. Lastly, let
$[\cdot,\cdot,\cdot]_A$ so that $(a,b,c) \in [\cdot,\cdot,\cdot]_A$ with: for
$x\in a$, $y\in b$ and $z\in c$ we can find an some $n\in \N$ so that
$[q_n(x),q_n(y),q_n(z)]^n$.  We seek to show that for any antisymmetric
R-relation $(Y,[\cdot,\cdot,\cdot]_Y)$ with monotone $(X_n,
[\cdot,\cdot,\cdot]_n) \to (Y,[\cdot,\cdot,\cdot]_Y)$ so that for all $n$

\[
\xymatrix{
\ldots \ar[r] & (X_{n-1},[\cdot,\cdot,\cdot]_{n-1}) \ar[rr]^{q_n}\ar[ddr] & &
(X_n,[\cdot,\cdot,\cdot]_A^n)\ar[r]\ar[ddl] &\ldots \\
&  \\
& & (Y,[\cdot,\cdot,\cdot]_Y)\\
}
\]
\noindent
commutes, then we can find a unique $(X_A,[\cdot,\cdot,\cdot]_A) \to
(Y,[\cdot,\cdot,\cdot]_Y)$ so that 

\[
\xymatrix{
\ldots \ar[r] & (X_{n-1},[\cdot,\cdot,\cdot]_{n-1}) \ar[rr]^{q_n}\ar@/_2.5pc/[ddr]\ar@/_1.5pc/[dr]  & &
(X_n,[\cdot,\cdot,\cdot]_A^n)\ar[r]\ar@/^2.5pc/[ddl] \ar@/^1.5pc/[dl] &\ldots \\
& & (X_A,[\cdot,\cdot,\cdot]_A)\ar[d]\\
& &(Y,[\cdot,\cdot,\cdot]_Y)\\
}
\]
\noindent
does as well. 

\begin{lemma} For any R-relation $(X,[\cdot,\cdot,\cdot])$,
$(X_A,[\cdot,\cdot,\cdot]_A)$ is an antisymmetric R-relation for which

\begin{enumerate}
\item The quotient map $q:X \to X_A$ is monotone, and
\medskip
\item Given any other antisymmetric $(Y,[\cdot,\cdot,\cdot]_Y)$ with monotone
$f: X \to Y$ there exists a unique monotone $g: X_A \to Y$ so that the diagram

\[
\xymatrix{
(X,[\cdot,\cdot,\cdot])\ar[rr]^q\ar[ddr]_f &
&(X_A,[\cdot,\cdot,\cdot]_A)\ar[ddl]^g\\
&  \\
& (Y,[\cdot,\cdot,\cdot]_Y)
}
\]
commutes.
\end{enumerate}
\end{lemma}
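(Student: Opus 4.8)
The plan is to exhibit $(X_A,[\cdot,\cdot,\cdot]_A)$ as the colimit in $\bt$ of the $\omega$-chain
\[
(X,[\cdot,\cdot,\cdot]) \longrightarrow (X_1,[\cdot,\cdot,\cdot]_A^1) \longrightarrow (X_1,[\cdot,\cdot,\cdot]_1) \longrightarrow (X_2,[\cdot,\cdot,\cdot]_A^2) \longrightarrow \cdots
\]
constructed above. This identification is essentially the content of the two diagrams preceding the statement: since colimits in $\bt$ are computed on underlying sets, the colimit set is $X/{\sim_A}=X_A$, and the colimit ternary relation is the one generated by the images of the stage relations, which is exactly $[\cdot,\cdot,\cdot]_A$ (note that ``$[a,b,c]_A$ holds for some representatives at some stage'' is equivalent to ``it holds for every choice of representatives at some stage'', since any two representatives of a class are glued at a finite stage). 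Granting this, the lemma breaks into: (i) $(X_A,[\cdot,\cdot,\cdot]_A)$ satisfies R1--R4 and antisymmetry; (ii) $q$ is monotone; and (iii) any monotone $f\colon X\to Y$ into an antisymmetric R-relation factors uniquely and monotonically through $q$.

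For (i) the engine is that the transition maps of the chain are monotone, so every stage relation maps into the next, and a triple lies in $[\cdot,\cdot,\cdot]_A$ precisely when a lift of it already lies in some finite stage. R1 and R2 are then immediate: choosing representatives $x,y,z$ for classes $a,b,c$, the triple $[x,y,y]$ is present already in $X$, and if $[a,b,c]_A$ holds via a triple at stage $n$ then R2 at stage $n$ gives the reversed triple; both descend to $X_A$. For R4 I would use that the post-$L_4$ stages $(X_n,[\cdot,\cdot,\cdot]_n)$ are R4-relations (Lemma~\ref{lem:R4}(a)) and are cofinal in the chain: given $[a,b,c]_A$, $[a,d,c]_A$ and $[b,x,d]_A$, pick representatives and a single stage $N$ which is post-$L_4$ and large enough that the three witnessing triples are present there and that all representatives of a common class occurring among them have already been glued; then R4 at stage $N$ produces the triple witnessing $[a,x,c]_A$. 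Antisymmetry is the same argument over the cofinal subchain of post-$L_A$ stages, which are antisymmetric by construction: from $[a,b,c]_A$ and $[a,c,b]_A$ one finds a post-$L_A$ stage $N$ at which $[A',B',C']^N$ and $[A',C',B']^N$ hold for the glued images of the representatives, whence $B'=C'$ and $b=c$. Finally R3 follows from R1, R2 and antisymmetry: the R1-instance $[b,a,a]$ gives $[a,a,b]$ by R2, and antisymmetry applied to $[a,b,a]$ together with $[a,a,b]$ yields $a=b$.

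Claim (ii) is immediate, since $q$ is the composite of the monotone transition maps (equivalently, any $[a,b,c]$ in $X$ already holds at stage $1$). For (iii), given monotone $f\colon X\to Y$ with $Y$ an antisymmetric R-relation, I would promote $f$ to a cocone on the whole chain by alternately invoking two reflections: Lemma~\ref{lem:R4}(b) lets a monotone map into the R4-relation $Y$ be carried, without altering the underlying function, from an $L_A$-stage to the next $L_4$-stage; and the reflection of $L_A$ onto antisymmetric $\bt$-objects lets a monotone map into the antisymmetric $Y$ be carried across each $L_A$-quotient. Starting from $f$ and iterating produces monotone maps $(X_n,\cdot)\to Y$ compatible with all transition maps, i.e.\ a cocone; the colimit universal property recorded before the statement then delivers a unique monotone $g\colon X_A\to Y$ with $g\circ q=f$. (Uniqueness is also clear directly, as $q$ is surjective and hence epic in $\bt$.)

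The step carrying the real weight is the reflection property of $L_A$ used in (iii): every monotone map $h$ from a ternary relation into an antisymmetric one must identify any two points related by the equivalence $\sim$ defining $L_A$. As with Lemma~\ref{lem:R3}(b), this is proved by induction on $r$, showing $a\sim_r b\Rightarrow h(a)=h(b)$; the inductive step must unwind the chain $\{[m_i,x_i,y_i]\mid i\le n\}$ witnessing $a\sim_{r+1}b$ and apply antisymmetry of the target repeatedly (using R1 where the definition invokes it) to collapse it to the single identity $h(a)=h(b)$. Matching the odd/even alternation of the linking conditions in the definition of $\sim_{r+1}$ with the successive applications of antisymmetry is the one genuinely fiddly point; the rest is the colimit formalism together with the reflections already in hand.
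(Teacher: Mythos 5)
Your proposal is correct and follows essentially the same route as the paper: the paper likewise verifies antisymmetry (and R4) by pulling the witnessing triples to a common sufficiently late stage of the chain, derives R3 from R1 plus antisymmetry, and obtains $g$ by the same induction on the gluing relation using antisymmetry of the target, with monotonicity coming from the universality of $L_4$. Your writeup is simply more explicit about the colimit formalism and about alternating the two reflections when promoting $f$ to a cocone --- details the paper's proof leaves implicit.
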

\begin{proof} That $[\cdot,\cdot,\cdot]_A$ satisfies R1 and R2 is obvious. Next
we show antisymmetry: let $[a,b,c]_A$ and $[a,c,b]_A$. This means that for some
$r,s\in \N$, with $x_1,x_2\in a$, $y_1,y_2\in b$ and $z_1,z_2\in c$ we have
$[q_r(x_1),q_r(y_1),q_r(z_1)]_A^r$ and $[q_s(x_2),q_s(z_2),q_s(y_2)]_A^s$. Without
loss of generality, let $s\geq r$ and notice that
$[q_s(x_1),q_s(y_1),q_s(z_1)]^s$ and thus claim follows. For R3 we need only
notice that R1 $+$ antisymmetry $\Rightarrow$ R3. That R4 holds is proved in much
the same way. 
\noindent
By design, $(1)$ should be clear. For universality, and to prove $(2)$, we first
show that $g$ is always well-defined. Let $a\in X_A$ and $x,y\in a$. Assume that
$x\sim_1 y$; that is, there exists a $z\in X$ so that $[z,x,y]$ and $[z,y,x]$.
Since $[\cdot,\cdot,\cdot]_Y$ is antisymmetric, then $f(x) = f(y)$. Assume this
holds up to some $k\in \N$ and that $x\sim_{k+1}y$. Again, since
$[\cdot,\cdot,\cdot]_Y$ is antisymmetric, it must be that $f(x)=f(y)$. Finally,
to prove that $g$ is monotone, recall that $L_4$ always yields a universal
R4-relation. With this in mind, let $[a,b,c]_A$. Immediately we obtain that for
some $n\in \N$, $[q_n(x),q_n(y),q_n(z)]^n$ with $x\in a$, $y\in b$, and $z\in
c$. By universality of $L_4$, then it must be that $[f(x),f(y),f(z)]_Y$ and the
proof is complete.
\end{proof}

\begin{theorem} The bicomplete category $\ba$ is a reflective, but not coreflective,
subcategory of $\br$.
\end{theorem}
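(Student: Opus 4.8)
The plan is to derive all three assertions from the preceding Lemma together with standard facts about reflective subcategories, and to locate the failure of coreflectivity in the phenomenon already displayed by the Example.

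\textbf{Reflectivity.} The preceding Lemma already provides, for each R-relation $(X,[\cdot,\cdot,\cdot])$, an antisymmetric R-relation $(X_A,[\cdot,\cdot,\cdot]_A)$ and a monotone $q_X\colon X\to X_A$ with the universal property that every monotone map from $X$ into an antisymmetric R-relation factors uniquely through $q_X$. First I would promote $X\mapsto X_A$ to a functor: given $h\colon X\to X'$ in $\br$, the composite $q_{X'}\circ h$ lands in an antisymmetric object and hence factors uniquely as $h_A\circ q_X$ for a unique monotone $h_A\colon X_A\to X'_A$; uniqueness then forces functoriality and makes $q\colon \mathrm{id}_{\br}\Rightarrow\iota\circ(-)_A$ a natural transformation. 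This is precisely the statement that $(-)_A$ is left adjoint to the inclusion $\iota\colon\ba\hookrightarrow\br$ with unit $q$, so that $\ba$ is a reflective subcategory of $\br$, the reflector being the length-$\omega$ limit construction assembled just before the Lemma.

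\textbf{Bicompleteness.} For completeness I would check that $\ba$ is closed under limits in $\br$: if $[a,b,c]$ and $[a,c,b]$ hold in a limit relation, then projecting to each factor gives $[a_i,b_i,c_i]$ and $[a_i,c_i,b_i]$, so $b_i=c_i$ for every (antisymmetric) factor and $b=c$; hence products and equalizers of antisymmetric R-relations are antisymmetric and coincide with those formed in $\br$, which is complete by Corollary~\ref{cor:joinsexplicit} and the bicompleteness of $\bt$. Cocompleteness is then automatic, a reflective subcategory of a cocomplete category being cocomplete, with colimits obtained by applying the reflector $(-)_A$ to the colimit formed in the ambient category; since $\br$ is cocomplete (again by Corollary~\ref{cor:joinsexplicit} and $\bt$ bicomplete), $\ba$ acquires all small colimits, which in particular supplies the coequalizers promised earlier, namely the antisymmetrisation $(-)_A$ of the $\br$-coequalizer.

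\textbf{Failure of coreflectivity.} The key point is that $\iota\colon\ba\hookrightarrow\br$ does not preserve all colimits and therefore cannot be a right adjoint. Coproducts of antisymmetric R-relations are already antisymmetric and agree with those in $\br$, so the obstruction must sit in coequalizers, and this is exactly what the Example is engineered to deliver: the $\omega$-step alternation of $L_4$ and $L_A$ exhibits an R-relation assembled from antisymmetric pieces (the countably many copies of $(X',[\cdot,\cdot,\cdot]')$ glued along the maps described there) whose colimit formed in $\br$ fails antisymmetry, so that the genuine colimit in $\ba$ is a strictly coarser quotient of it. Concretely I would recast that gluing as a parallel pair $f,g\colon A\rightrightarrows B$ of monotone maps of antisymmetric R-relations whose coequalizer in $\br$ is not antisymmetric; then $\iota(\mathrm{coeq}_{\ba}(f,g))=(\mathrm{coeq}_{\br}(f,g))_A\neq\mathrm{coeq}_{\br}(f,g)$, so $\iota$ does not preserve this coequalizer and hence has no right adjoint. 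This recovers, now with an explicit witness, Bankston's observation on page~15 of \cite{pre06252969} that $\ba\hookrightarrow\br$ admits no right adjoint. The step I expect to be the main obstacle is precisely this last one: converting the informal infinite gluing of the Example into a bona fide coequalizer diagram whose $\br$-coequalizer is provably non-antisymmetric, rather than merely asserting that some colimit is not preserved.
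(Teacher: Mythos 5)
Your reflectivity and bicompleteness arguments are fine and essentially the paper's: the preceding Lemma is exactly the universal property of a reflection, functoriality of $(-)_A$ and the adjunction follow formally, and coequalizers in $\ba$ are obtained by reflecting those of $\br$, which is all the paper says on this point.

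The gap is in the non-coreflectivity argument. Your strategy --- exhibit a colimit of antisymmetric objects that $\iota\colon\ba\hookrightarrow\br$ fails to preserve, so that $\iota$ is not a left adjoint and hence has no right adjoint --- is valid in principle, but you never produce the witness, and you explicitly defer the one step that carries all the content. Moreover, the Example you lean on cannot supply it as stated: the relation $(X,[\cdot,\cdot,\cdot])$ there is non-antisymmetric from the outset (the point $y$ with $[y,d_1,d_2]$ and $[y,d_2,d_1]$), so it is not ``assembled from antisymmetric pieces'' by a coequalizer in $\ba$; that Example is about the failure of finitely many alternations of $L_A$ and $L_4$ to terminate, not about preservation of colimits. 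A correct witness is small but must actually be written down: for instance, take an antisymmetric $B$ on $\{a,b,c,b',c'\}$ containing $[a,b,c]$ and $[a,c',b']$, let $A$ be a two-point discrete object, and let $f,g\colon A\rightrightarrows B$ pick out $(b,c)$ and $(b',c')$; the $\br$-coequalizer identifies $b\sim b'$ and $c\sim c'$ and contains both $[a,b,c]$ and $[a,c,b]$, hence is not antisymmetric. The paper instead argues directly: it takes the three-point non-antisymmetric R-relation $X_\bot\cup\{[a,b,c],[a,c,b]\}$ and shows no universal arrow from $\ba$ into it (no coreflection of this object) can exist, because maps from the one-point object and from the two antisymmetric subrelations obtained by deleting $[a,b,c]$ or $[a,c,b]$ force any candidate $Z$ to contain three distinct points satisfying both $[a^*,b^*,c^*]_Z$ and $[a^*,c^*,b^*]_Z$, contradicting antisymmetry of $Z$. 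Finally, fix the slip ``cannot be a right adjoint'': $\iota$ \emph{is} a right adjoint (it has the left adjoint you just constructed); what fails is that it is a \emph{left} adjoint, i.e.\ that it has a right adjoint.
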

\begin{proof} Coequalisers were the only constructions left to produce and those
follow directly from the previous lemma.
To prove that $\ba \hookrightarrow \bt$ is not left adjoint consider the set $X
= \{a,b,c\}$ with $[\cdot,\cdot,\cdot] = X_\bot \cup \{[a,b,c],[a,c,b]\}$ (a
non-antisymmetric R-relation). To show that no universal arrow
$((Z,[\cdot,\cdot,\cdot]_Z),Z\to X)$ exists first notice that for $i \in X$ and $\overline{1} = (\{1\}, \{(1,1,1)\}$ the functions $f_i
: \overline{1} \to X$ for which $1 \mapsto i \in X$ demand for exactly three
distinct elements in $Z$ to be between themselves; for $a^*,b^*,c^* \in Z$,
$[a^*,a^*,a^*]_Z$, $[b^*,b^*,b^*]_Z$ and $[c^*,c^*,c^*]_Z$. Moreover, the arrow $Z\to X$
must map $a^* \mapsto a$, $b^*\mapsto b$ and $c^* \mapsto c$. Next, take $(X,
[\cdot,\cdot,\cdot]\smallsetminus \{[a,b,c]\})$ and notice that the
corresponding arrow for the identity on $X$ yields that $[a^*,c^*,b^*]_Z$. Applying
the very same logic to $(X, [\cdot,\cdot,\cdot]\smallsetminus \{[a,c,b]\})$
yields $[a^*,b^*,c^*]_Z$ and since $b^*\not = c'$ the proof is complete.
\end{proof}

\subsection{Distributive closures of complete lattices.} 
In \cite{pre06252969} the notion of a
road system is exemplified for lattices as
follows: to a lattice $L$ one can associate a road system by letting roads be the order-theoretic intervals $[a\wedge b, a\vee b]$ for all $a,b \in L$. That is, the betweenness intervals become
$[a,b] = \{ y \mid a\wedge b \leq y \leq a\vee b\}$. Characterisations of certain ordered sets via betweenness relations they
generate abound in the literature. Some betweenness relations are considered to
occur only when elements are present in a linear suborder \cite{ResultsChara}.
That is, $[a,b,c]$ provided $a < b < c$ or its converse. Others, as in
\cite{MR1120620} and \cite{MR1383662}, extend this notion to all partial orders by considering $[a,b,c]$
precisely when $(a\vee b)\wedge (b\vee c) = b = (a \wedge b) \vee (b\wedge c)$; a much stronger condition than the one employed in this paper. Before establishing the following result we introduce an additional betweenness axiom closely related to R-relations generated from the Q, K and C-interpretations of betweenness in continua (\cite{MR3101640} and \cite{MR3268066}). An R-relation $(X,[\cdot,\cdot,\cdot])$ is {\bf disjunctive} provided that for all $a,b,c,x \in X$ if $x\in[a,b]$ then $[a,x,c]$ or $[c,x,b]$.

\begin{lemma}\label{lem:latticetorel} Let  $(X, [\cdot,\cdot,\cdot])$  be an R-relation generated from the intervals
of a lattice $(X,\leq)$, then $(X,\leq)$ is linear if and only if $(X, [\cdot,\cdot,\cdot])$ is disjunctive. Also, $(X,\leq)$ is bounded precisely when there
exists a pair $\alpha,\beta \in X$ with
$[\alpha,\beta] = X$ and so that: $\forall x,y\in X$, $[x,\beta] = [y,\beta] \Rightarrow
x=y$, and for all finite $S\subseteq X$ we can find $p,q\in X$ with $\bigcap_{x\in S} [x,\beta] =
[p,\beta]$ and $\bigcap_{x\in S} [x,\alpha] =
[q,\alpha]$. In the following, all lattices are bounded.
\medskip
\begin{enumerate}[(a)]

\item $(X,\leq)$ is a complete lattice precisely when for all $S\subseteq X$ we can find a $p\in X$ with $\bigcap_{x\in S} [x,\beta] =
[p,\beta]$.
\medskip
\item $(X,\leq)$ is a modular lattice precisely when for all $x,y,c \in X$ if $[x,\beta] \subseteq [y,\beta]$ and $[x,c] = [y,c]$
then $x=y$.
\medskip
\item $(X,\leq)$ is a distributive lattice precisely when
$(X, [\cdot,\cdot,\cdot])$ is antisymmetric.
\medskip
\item $(X,\leq)$ is a complete and distributive lattice precisely when in addition to (a),
$(X, [\cdot,\cdot,\cdot])$ is antisymmetric.
\medskip
\item  $(X,\leq)$ is a Boolean lattice precisely when in addition to (c) for any
$x\in X$ we can find $y\in X$ so that $[x,y] = X$. 

\end{enumerate}

\end{lemma}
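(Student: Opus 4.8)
The plan is to translate each lattice-theoretic condition into its betweenness counterpart, using the fact that for a lattice $(X,\leq)$ the road-system intervals are $[a,b]=\{y\mid a\wedge b\leq y\leq a\vee b\}$, together with the "coordinatisation" idea that the maps $x\mapsto [x,\beta]$ and $x\mapsto [\alpha,x]$ recover the order. The first observation to nail down is the \emph{order recovery}: in a bounded lattice with bounds $\alpha=0$ and $\beta=1$, one has $x\leq y$ iff $[x,\beta]\supseteq[y,\beta]$ (since $[x,1]=\{z\mid x\leq z\leq 1\}=\upar x$), and dually $x\leq y$ iff $[\alpha,x]\subseteq[\alpha,y]$. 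Hence $[x,\beta]=[y,\beta]$ forces $x=y$, establishing the stated characterisation of boundedness; conversely, if such a pair $\alpha,\beta$ exists then $[\alpha,\beta]=X$ says everything lies in the interval from $\alpha$ to $\beta$, i.e. $\alpha$ and $\beta$ are the bounds. The disjunctive-equals-linear claim I would handle first and separately: if $\leq$ is linear then any $x\in[a,b]$ lies between $a\wedge b$ and $a\vee b$, and comparing $x$ with the relevant element gives $[a,x,c]$ or $[c,x,b]$; conversely, if $\leq$ is not linear pick incomparable $a,b$, note $a,b\in[a,b]$ (since $a\wedge b\leq a,b\leq a\vee b$), take $x=a$ and $c=b$, and check that neither $[a,a,b]$ (which would need $a\leq b$) nor $[b,a,b]$ (which would need $a=b$ by R3) holds.

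Next I would do part (a). Using the translation $[x,\beta]=\upar x$, the condition "$\bigcap_{x\in S}[x,\beta]=[p,\beta]$" reads "$\bigcap_{x\in S}\upar x=\upar p$", which is exactly the assertion that $S$ has a meet (namely $p$). So the existence of such a $p$ for all $S\subseteq X$ (resp. all finite $S$) is precisely completeness (resp. the lattice axiom, given we already have a bounded poset — joins then come for free from meets of upper bounds). This is essentially a definitional unwinding once the order-recovery lemma is in place.

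For (b)--(e) I would argue by successive translation. For (c), distributivity: the standard fact is that a lattice is distributive iff it contains no copy of $M_3$ or $N_5$, equivalently iff "relative complements are unique," and the failure of antisymmetry of $[\cdot,\cdot,\cdot]$ — some $b\neq c$ with $[a,b,c]$ and $[a,c,b]$ — should be massaged into exactly such a configuration; conversely antisymmetry of the relation should be shown to preclude both pentagon and diamond. Concretely, $[a,b,c]$ means $a\wedge c\leq b\leq a\vee c$, so $[a,b,c]\wedge[a,c,b]$ with $b\neq c$ gives two distinct elements $b,c$ in the same interval $[a\wedge c\,\text{-ish region}]$ that are "swappable," and in a distributive lattice one shows this is impossible by computing with the distributive law; the non-distributive direction produces the bad pair inside $M_3$ or $N_5$ directly. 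Part (d) is then (a)$+$(c): complete $+$ distributive-as-antisymmetric, and one checks that a complete distributive lattice in this sense is completely distributive — here I would either invoke that a complete lattice is completely distributive iff it is distributive and \emph{both} it and its dual are "continuous"/join-infinite-distributive, or, more in the spirit of the paper, note that the betweenness data plus completeness forces the infinite distributive identities by the same interval computation used in (c) applied to arbitrary families. Part (b), modularity: the condition "$[x,\beta]\subseteq[y,\beta]$ and $[x,c]=[y,c]$ imply $x=y$" translates to "$y\leq x$ and $x,y$ have the same relative interval with $c$ imply $x=y$," which is precisely the statement that the lattice has no pentagon $N_5$, i.e. modularity; this is the one place where I would have to be careful that "$[x,c]=[y,c]$" is the right encoding of "$x\wedge c=y\wedge c$ and $x\vee c=y\vee c$," but since $[x,c]=[x\wedge c, x\vee c]$ as an order-interval, equality of these sets is equality of both endpoints, which is exactly the pentagon-forbidding condition. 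Part (e), Boolean: on top of (c), "$[x,y]=X$ for some $y$" says $x\wedge y=\alpha=0$ and $x\vee y=\beta=1$, i.e. $y$ is a complement of $x$; a distributive lattice in which every element is complemented is Boolean, and conversely.

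The main obstacle I anticipate is part (d), specifically going from "complete $+$ antisymmetric betweenness" to \emph{complete} distributivity rather than merely finite distributivity: one must promote the finite interval identity to an arbitrary one, and the cleanest route is probably to show that in this setting every element is a join of completely join-irreducibles (using completeness via (a)) and then run the distributive computation over the irreducibles; getting that decomposition purely from the betweenness axioms, without circularity, is the delicate point. A secondary subtlety is making sure all of (b)--(e) are read inside the standing hypothesis "all lattices are bounded," since unboundedness would break the $x\mapsto[x,\beta]$ coordinatisation that every translation relies on.
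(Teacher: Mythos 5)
Your overall strategy coincides with the paper's: recover the order via $x\leq y \Leftrightarrow [y,\beta]\subseteq[x,\beta]$, then translate (a) into existence of suprema, (b) and (c) into the forbidden-sublattice characterisations via $N_5$ and $M_3$, and (e) into complementation. Two concrete problems, though. First, your witness for ``not linear $\Rightarrow$ not disjunctive'' fails: with $a,b$ incomparable, $x=a$ and $c=b$, the triple $[a,a,b]$ \emph{does} hold, since $a\wedge b\leq a\leq a\vee b$ always (interval betweenness is reflexive at the endpoints, not the strict order betweenness you seem to be using when you say it ``would need $a\leq b$''). The violation has to be produced with $x=a\vee b$ and $c=a\wedge b$ (or dually): then $[a,a\vee b,a\wedge b]$ forces $a\vee b\leq a$ and $[a\wedge b,a\vee b,b]$ forces $a\vee b\leq b$, both contradicting incomparability. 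Second, a small but real slip in (a): $\bigcap_{x\in S}[x,\beta]=[p,\beta]$ says $p$ is the \emph{join} of $S$ (the least element of the set of upper bounds), not its meet; the characterisation of completeness survives, but the identification should be corrected.

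On (d) you are right to be suspicious, and here you are more careful than the paper, which dismisses (d) and (e) as ``trivial.'' Completeness plus (finite) distributivity does not imply complete distributivity (a complete atomless Boolean algebra is a counterexample), so the promotion of the finite interval identity to the infinite one that you identify as the delicate point is not merely delicate --- it is the whole content of (d), and neither your sketch nor the paper supplies it. Your proposed route through completely join-irreducible decompositions is the standard one but is exactly what fails in such counterexamples, so as written this part of the lemma cannot be proved by either argument without a stronger hypothesis on the betweenness relation.
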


\begin{proof}  Disjunctivity clearly yields a linear order (this is also found in \cite{pre06252969}, pg. 396 Example 4.0.2 (i)). Given an R-relation $(X, [\cdot,\cdot,\cdot])$ generated by a bounded lattice $(X,\leq)$ it is possible to recover a bounded lattice as follows. Let $\alpha,\beta \in X$ as proposed in the premise.
Next, define $\leq'$ so that for any pair $x,y\in X$: $x\leq' y$ if, and only if, $[y,\beta] \subseteq [x,\beta]$. The meet and joins of a pair $x,y$ are then the elements $q$ and $p$ respectively. The converse is trivial.

Item (a) is straightforward: given $S \subseteq X$ the existence of $\bigvee S = p \in X$ is equivalent to the equality $\bigcap_{x\in S} [x,\beta] =
[p,\beta]$. The purpose of (b) is to avoid sublattices isomorphic to $N_5$. For (c), let $(X, [\cdot,\cdot,\cdot])$ be antisymmetric and assume that
$\leq$ is not distributive. In which case we can find sublattices of $(X,\leq)$
isomorphic to $N_5$ or $M_3$. Both scenarios yield intervals with three
elements (i.e., the middle ones) that would contradict $(X, [\cdot,\cdot,\cdot])$ being separative. The
converse is proved in much the same way. Items (d) and (e) are trivial.
\end{proof}

\begin{remark} 
Notice that given a Boolean lattice $(X,\leq)$ much can be lost when generating its betweenness relation $(X, [\cdot,\cdot,\cdot])$. Consider the case $(X,\leq) = (\PP(Y), \subseteq)$. Within the R-relation that $(X,\leq)$ generates, any pair of complemented elements (i.e., $A,B \in X$ with $A = Y\smallsetminus B$) are suitable candidates for the top and bottom elements of the reconstructed lattice from $(X, [\cdot,\cdot,\cdot])$.
\end{remark}

Let $(X, [\cdot,\cdot,\cdot])$ be an R-relation generated from a complete lattice $(X,\leq)$ and let $(Y, [\cdot,\cdot,\cdot]') = L_A[(X, [\cdot,\cdot,\cdot])]$ with its corresponding lattice $(Y,\leq')$. Clearly, the quotient map $X \to Y$ is a complete homomorphism. Let {\bf CLat}  denote the category of complete lattices with complete lattice morphisms and {\bf DCLat} its full subcategory of complete and distributive lattices. Next we reprove, from a betweenness perspective, a classical result in lattice theory.

\begin{corollary} The antisymmetric closure operator is a reflector between {\bf CLat} and {\bf DCLat}.
 
\end{corollary}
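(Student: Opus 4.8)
The plan is to show that the antisymmetric closure operator $L_A$ (or, more precisely, the length-$\omega$ iteration $L$ of $L_A$ and $L_4$ from the preceding subsection) restricts to a functor $\mathbf{CLat}\to\mathbf{DCLat}$ which is left adjoint to the inclusion $\mathbf{DCLat}\hookrightarrow\mathbf{CLat}$. The key is to exploit the dictionary supplied by the previous lemma: an R-relation generated by a bounded lattice $(X,\leq)$ is antisymmetric if and only if $(X,\leq)$ is distributive (item (c)), and completeness of the lattice corresponds to the interval condition in item (a), which is preserved because the relevant quotient maps are complete lattice homomorphisms. So the first step is to check that, starting from a complete lattice $(X,\leq)$, each stage of the iterated construction yields (the R-relation of) a complete lattice again, i.e. condition (a) survives each application of $L_A$ and of $L_4$; for $L_A$ this is the content of the remark immediately before the corollary (the quotient $X\to Y$ is a complete homomorphism), and for $L_4$ one notes that $L_4$ does not alter the underlying set nor, on the lattice side, the order, so condition (a) is untouched. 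Passing to the inverse/direct limit $(X_A,[\cdot,\cdot,\cdot]_A)$ then gives a complete lattice, and by item (c) it is distributive since $[\cdot,\cdot,\cdot]_A$ is antisymmetric; this is the object-level part of the functor.

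Next I would pin down the morphisms. A complete lattice homomorphism $h\colon (X,\leq)\to(W,\leq)$ induces a monotone map on the generated R-relations (since $h$ preserves finite meets and joins, it preserves the generating intervals $[a\wedge b,a\vee b]$), so by functoriality of $L$ we get $L_A h\colon X_A\to W_A$, and one checks this is again a complete homomorphism by the same quotient-map reasoning. If $(W,\leq)$ is already completely distributive, then its generated R-relation is already antisymmetric, so the universal property of $L_A$ from the preceding lemma gives a unique monotone factorisation $X_A\to W$; the remaining point is that this factorisation is automatically a complete lattice morphism — this follows because the quotient map $q\colon X\to X_A$ is a surjective complete homomorphism, so any $g$ with $g\circ q = h$ inherits meet/join preservation from $h$ on the image, which is all of $X_A$. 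Assembling these observations yields the adjunction $\mathbf{CLat}(X_A,W)\cong\mathbf{DCLat}(X,W)$ naturally in $X$ and $W$, i.e. $L_A$ is the claimed reflector.

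The main obstacle I anticipate is bookkeeping around the interaction between the two notions of morphism (monotone maps of R-relations versus complete lattice homomorphisms) and making sure the equivalence ``R-relation antisymmetric $\Leftrightarrow$ lattice distributive'' plus ``interval condition $\Leftrightarrow$ completeness'' is genuinely compatible with the limit process — in particular that the direct limit of the $(X_n,[\cdot,\cdot,\cdot]_n)$, which lives a priori only in $\br$, actually comes from a complete lattice and that the colimit structure matches the lattice-theoretic one. Since each $q_n$ is a complete homomorphism and $\mathbf{CLat}$ has the relevant colimits, the limit lattice is obtained by quotienting $X$ by the congruence generated by all the identifications, and one must verify this congruence is exactly $\sim_A$; granting the earlier lemmas this is essentially immediate. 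A secondary subtlety is the completely-distributive case: one should remark that item (d) of the previous lemma guarantees that once $[\cdot,\cdot,\cdot]_A$ is antisymmetric \emph{and} condition (a) holds, the resulting lattice is not merely distributive but \emph{completely} distributive, so the corollary as stated — landing in $\mathbf{DCLat}$ — is correct, with ``distributive'' there understood as ``completely distributive'' in the presence of completeness.
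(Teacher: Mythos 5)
Your argument is correct and follows essentially the same route as the paper: show that the antisymmetric closure of the interval R-relation of a complete lattice again satisfies condition (a) of the preceding lemma (the paper does this by exhibiting $[a,\top]\cap[b,\top]$ as a saturated interval whose least element's class realises the meet, you by invoking that each quotient is a complete homomorphism and that $L_4$ leaves lattice-generated relations untouched), deduce distributivity from antisymmetry via item (c), and obtain the reflector from the universal property of the closure; your extra morphism-level check that the factorisation through the surjective complete quotient is itself a complete homomorphism is a welcome filling-in of what the paper leaves implicit. The only thing to drop is your closing aside: complete plus distributive does not imply completely distributive in general, but this is harmless here since the paper defines $\mathbf{DCLat}$ as the complete \emph{and} distributive lattices.
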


\begin{proof}
We need only show that one can extract a complete and distributive lattice from the separative closure of a complete lattice since the quotient map is a complete lattice morphism. Take a pair $a,b \in Y$ and observe that $[a,\top]\cap [b,\top]$ is a saturated interval in $(X,\leq)$ and thus the equivalence class of the smallest element, say $p$, is the one for which $[[p],\top] = [a,\top]\cap [b,\top]$. The very same can be achieved when replacing the pair $a,b$ with any arbitrary collection of elements from $Y$. Distributivity comes from antisymmetry and the proof is complete.
\end{proof}

\subsection{Dedekind-MacNeille completions.}

We finish the paper with a short observation on preservation of betweenness by Dedekind-MacNeille completions. These types of completions have been extensively studied in the past and the reader can find a categorical interpretation of the Dedekind-MacNeille in \cite{MR1137908}.

 Denote the category of bounded partial orders and monotone functions as {\bf PO}. Consider an object $(X,\leq)$ in {\bf PO} and its completion $(Y,\leq')$ with embedding $q:X \to Y$. Recall that $q$ is always order preserving, so consider a pair of incomparable $x,y\in X$ and $R(x,y)$ generated from the convex sets of $(X,\leq)$. By the very nature of cuts $q(R(x,y)) \subseteq [q(x)\wedge q(y),q(x)\vee q(y)]$ and betweenness is preserved. Unlike distributive closures of complete lattices, the Dedekind-MacNeille completion is not a reflector between {\bf PO} and {\bf CLat}. This failure can be blamed on the over-abundance of betweenness morphisms when compared to complete lattice morphisms (the latter always preserve betweenness, whereas betweenness preservation does not always preserve order). However, we note that {\bf PO}'s injective objects are those from {\bf CLat} and as a consequence, {\bf PO} has enough injectives. 

\section*{Acknowledgements}

The authors would like to extend their gratitude to Walter Tholen and Lili Shen for their many helpful suggestions to the paper. We would also like to extend our sincere gratitude to the referee for her/his very delicate review of our manuscript: we consider her/his comments as invaluable to our work and its exposition.

\bibliographystyle{plain}
\bibliography{mybib}

\end{document}